\newtheorem{theorem}{Theorem}[section]
\newtheorem*{main1}{Main~Theorem~1}
\newtheorem*{main2}{Main~Theorem~2}
\theoremstyle{definition}
\newtheorem{definition}[theorem]{Definition}
\newtheorem{question}[theorem]{Question}
\newcommand{\plusplus}{{{+}{+}}}
\newcommand{\ZF}{{\rm ZF}}
\newcommand{\DC}{{\rm DC}}
\newcommand{\AC}{{\rm AC}}
\newcommand{\image}{\mathbin{\hbox{\tt\char'42}}}
\newcommand{\plus}{{+}}
\newcommand{\Union}{\bigcup}
\newcommand{\of}{\subseteq}
\newcommand{\lt}[1]{{\smalllt}#1}
\newcommand{\lesseq}[1]{{\smallleq}#1}
\newcommand{\smallleq}{\mathrel{\mathchoice{\raise2pt\hbox{$\scriptstyle\leq$}}{\raise1pt\hbox{$\scriptstyle\leq$}}{\raise1pt\hbox{$\scriptscriptstyle\leq$}}{\scriptscriptstyle\leq}}}
\newcommand{\smalllt}{\mathrel{\mathchoice{\raise2pt\hbox{$\scriptstyle<$}}{\raise1pt\hbox{$\scriptstyle<$}}{\raise0pt\hbox{$\scriptscriptstyle<$}}{\scriptscriptstyle<}}}
\newcommand{\Add}{\mathop{\rm Add}}
\newcommand{\GCH}{{\rm GCH}}
\newcommand{\Z}{{\rm Z}^*}
\newcommand{\ORD}{\mathop{{\rm ORD}}}
\newcommand{\ZFC}{{\rm ZFC}}
\newcommand{\Levy}{L{\'e}vy}
\newcommand{\one}{\mathop{1\hskip-2.5pt {\rm l}}}
\newcommand{\p}{\mathbb{P}}
\newcommand{\q}{\mathbb{Q}}
\newcommand{\R}{\mathbb{R}}
\newcommand{\la}{\langle}
\newcommand{\ra}{\rangle}
\newcommand{\her}[1]{H_{{#1}^+}}
\newcommand{\forces}{\Vdash}
\newcommand{\Los}{\L o\'s}
\newcommand{\restrict}{\upharpoonright}
\newcommand{\intersect}{\cap}
\def\<#1>{\langle#1\rangle}
\newcommand{\st}{\mid}
\newcommand{\dom}{\text{dom}}
\newcommand{\Adot}{\dot{A}}
\begin{document}
\title{On ground model definability}
\author{Victoria Gitman and Thomas A. Johnstone}
\maketitle
\begin{abstract}
Laver, and Woodin independently, showed that models of $\ZFC$ are uniformly definable in their set-forcing extensions, using a ground model parameter \cite{laver:groundmodel,woodin:groundmodel}. We investigate ground model definability for models of fragments of $\ZFC$, particularly of $\ZF+\DC_\delta$ and of $\ZFC^-$, and we obtain both positive and negative results. Generalizing the results of \cite{laver:groundmodel}, we show that models of $\ZF+\DC_\delta$ are uniformly definable in their set-forcing extensions by posets admitting a gap at~$\delta$, using a ground model parameter.  In particular, this means that  models of $\ZF+\DC_\delta$ are uniformly definable in their forcing extensions by  posets of size less than $\delta$. We also show that it is consistent for ground model definability to fail for canonical $\ZFC^-$ models $\her{\kappa}$. Using forcing, we produce a $\ZFC$ universe in which there is a cardinal $\kappa>\!>\omega$ such that $\her{\kappa}$  is not definable in its Cohen forcing extension. As a corollary, we show that there is always a countable transitive model of $\ZFC^-$ violating ground model definability. These results turn out to have a bearing on ground model definability for models of $\ZFC$.  It follows from our proof methods that the hereditary size of the parameter that Woodin used in \cite{woodin:groundmodel} to define a $\ZFC$ model in its set-forcing extension is best possible.
\end{abstract}
\section{Introduction}
It took four decades since the invention of forcing for set theorists and to ask (and answer) what post factum seems as one of the most natural questions regarding forcing. Is the ground model a definable class of its set-forcing extensions? Laver published the positive answer in a paper mainly concerned with whether rank-into-rank cardinals can be created by small forcing \cite{laver:groundmodel}. Woodin obtained the same result independently, and it appeared in the appendix of \cite{woodin:groundmodel}.
\begin{theorem}[Laver, Woodin]\label{th:zfcgroundmodeldefinable}
Suppose $V$ is a model of $\ZFC$, $\p\in V$ is a forcing notion, and $G\subseteq\p$ is $V$-generic. Then in $V[G]$, the ground model $V$ is definable from the parameter $P(\gamma)^V$, where $\gamma=|\p|^V$.
\end{theorem}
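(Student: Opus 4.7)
The plan is to follow the Laver--Hamkins approach via approximation and cover properties. Let $\gamma = |\p|^V$ and $\delta = \gamma^+$, computed in $V$.

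I would first recall two structural properties of a pair $V \subseteq W$ of models of $\ZFC$. The $\delta$-approximation property asserts that any $A \subseteq \Ord$ in $W$ with $A \cap x \in V$ for every $x \in V$ of $V$-cardinality less than $\delta$ must itself lie in $V$. The $\delta$-cover property asserts that every set of ordinals in $W$ of $W$-cardinality less than $\delta$ is covered by some set in $V$ of $V$-cardinality less than $\delta$. The key lemma at this step, proved by analyzing $\p$-names and invoking the $\delta$-chain condition on $\p$, shows that the pair $(V, V[G])$ satisfies both properties.

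The heart of the argument is a uniqueness lemma: any two inner models $V_1, V_2$ of $V[G]$ satisfying $\ZFC$, both having the $\delta$-approximation and $\delta$-cover properties over $V[G]$, and agreeing on $P(\gamma)$, must be equal. Since $P(\gamma)^V$ codes $\her{\gamma}^V$ via Mostowski-collapse decoding of extensional well-founded relations on $\gamma$, the hypothesis yields $\her{\gamma}^{V_1} = \her{\gamma}^{V_2}$. To show $V_1 \subseteq V_2$, I would take an arbitrary $A \subseteq \Ord$ in $V_1$; by $\delta$-approximation in $V_2$ it suffices to prove $A \cap x \in V_2$ for each $x \in V_2$ of $V_2$-cardinality less than $\delta$; by $\delta$-cover in $V_1$ I would cover $x$ by some $y \in V_1$ of $V_1$-cardinality less than $\delta$; and then a coding argument based on the shared $\her{\gamma}$ transfers $A \cap y$ from $V_1$ to $V_2$, whence $A \cap x = (A \cap y) \cap x$ lies in $V_2$.

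Finally, I would assemble the definition of $V$ in $V[G]$: from the parameter $s = P(\gamma)^V$ read off $\gamma$ and compute $\her{\gamma}^V$; then declare $x \in V$ if and only if $x$ lies in some transitive set $M \in V[G]$ that contains $s$, models a sufficient fragment of $\ZFC$, satisfies the $\delta$-approximation and cover properties relative to $V[G]$, and has $P(\gamma)^M = s$. The uniqueness lemma, applied to the rank-initial-segments $V_\alpha^M$, forces these to agree with $V_\alpha^V$, so the formula picks out exactly $V$. The principal obstacle will be the uniqueness lemma, and specifically the coding step: an ordinal-set of $V_1$-cardinality below $\delta$ need not have transitive closure of size below $\delta$, so transferring $A \cap y$ from $V_1$ to $V_2$ will require either a rank induction on the supremum of the set or a secondary use of the cover property applied to a bijection between $\gamma$ and $y$ in $V_1$.
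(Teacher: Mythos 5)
Your overall architecture matches the paper's route exactly: the $\delta$-cover and $\delta$-approximation properties for $\delta=\gamma^+$ (Theorem~\ref{th:gapcoverapprox}), a uniqueness lemma (Theorem~\ref{th:coverandapprox}, generalized in this paper as Theorem~\ref{th:coverandapproxzf}), reflection to rank initial segments, and the reduction of the parameter to $P(\gamma)^V$. But there is a genuine gap, and it sits precisely where you suspect it does; neither of the repairs you float closes it. A set $A\cap y$ of $V_1$-cardinality at most $\gamma$ whose elements are large ordinals is not an element of $\her{\gamma}$ and is not absolutely coded by any subset of $\gamma$ (or of $\delta$): to transfer it you need a bijection of $y$ (or a cover of $y$) with an ordinal ${\le}\delta$ that \emph{both} models possess, and producing such a common bijection is the entire content of the missing step. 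Your second repair is circular, since a bijection between $\gamma$ and $y$ is itself a set of size $\gamma$ with large elements and transferring it raises the identical problem; and a rank induction on $\sup(A\cap y)$ breaks down at limit ordinals of cofinality less than $\delta$, where $A\cap y$ can be cofinal in the supremum and the inductive hypothesis gives you only the bounded pieces, not the sequence assembling them.

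The missing idea (Laver's Lemma 1.1; Claim 1 in the proof of Theorem~\ref{th:coverandapproxzf}) is a back-and-forth construction: starting from $x$, build an increasing chain $x\subseteq B_0\subseteq B_1\subseteq\cdots\subseteq B_\xi\subseteq\cdots$ of length $\delta$ of covers of size ${<}\delta$, alternately in $V_1$ and in $V_2$, using the $\delta$-cover property of each pair. The union $B$ has size $\delta$ and lies in \emph{both} models by the $\delta$-approximation property, because $\delta$ is regular and so $B\cap a$ stabilizes to some $B_\xi\cap a$ for every small $a$. The order isomorphism of $B$ with its order type $\eta$, composed with a bijection of $\eta$ onto an ordinal ${\le}\delta$, is coded by a subset of $\delta$ and is therefore common to both models; then $A\cap B$ transfers as a subset of $\delta$, and $A\cap x=(A\cap B)\cap x$. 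This is also where the hypothesis you dropped, $(\delta^+)^{V_i}=(\delta^+)^{V[G]}$, gets used: $|B|=\delta$ is computed in $V[G]$, and without agreement on $\delta^+$ the ordinal $\eta$ need not have cardinality ${\le}\delta$ inside $V_i$, so the required bijection need not exist there. (In the application this comes for free, since a poset of size $\gamma$ is $\delta$-cc and preserves $\delta^+$; and your substitution of $P(\gamma)$ for $P(\delta)$ in the uniqueness lemma is legitimate, since the $\delta$-approximation property lets each model recover its $P(\delta)$ from its $P(\gamma)$, exactly as in the parameter-reduction argument in Section~\ref{sec:zfcminus}.) With this claim in hand, the rest of your outline goes through as in the paper.
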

Indeed, it follows from the proof of Theorem~\ref{th:zfcgroundmodeldefinable}
that this definition of the ground model is uniform across all its set-forcing extensions. There is a first-order formula which, using the ground model parameter $P(\gamma)^V$ where $\gamma=|\p|^V$,\footnote{The parameter $P(\gamma)^V$ for $\gamma=|\p|^V$ appeared in Woodin's statement of Theorem~\ref{th:zfcgroundmodeldefinable}, while Laver's statement of it used the less optimal parameter $V_{\delta+1}$ for $\delta=\gamma^+$.} defines the ground model in any set-forcing extension\footnote{It is known that the ground model may not be definable in a class-forcing extension satisfying~$\ZFC$. A counterexample to definability, attributed to Sy-David Friedman, is the forcing extension by the class Easton product adding a Cohen subset to every regular cardinal \cite{hamkins:undefinableclassgroundmodel}.}. Before Theorem~\ref{th:zfcgroundmodeldefinable}, properties of the forcing extension in relation to the ground model could be expressed in the forcing language using the predicate $\check V$ for the ground model sets. But having a uniform definition of ground models in their set-forcing extensions was an immensely more powerful result that opened up rich new avenues of research. Hamkins and Reitz used it to introduce the \emph{Ground Axiom}, a first-order assertion  that a universe is not a nontrivial set-forcing extension \cite{reitz:groundaxiom}. Research on the Ground Axiom in turn grew into the set-theoretic geology project that reverses the forcing construction by studying what remains from a model of set theory once the layers created by forcing are removed \cite{FuchsHamkinsReitz:Set-theoreticGeology}.  Woodin made use of Theorem~\ref{th:zfcgroundmodeldefinable} in studying generic multiverses---collections of set-theoretic universes that are generated from a given universe by closing under generic extensions and ground models \cite{woodin:groundmodel}. In addition, Theorem~\ref{th:zfcgroundmodeldefinable} proved crucial to Woodin's pioneering work on suitable extender models, a potential approach to constructing the canonical inner model for a supercompact cardinal \cite{woodin:suitableextendermodels}.

In this article we investigate ground model definability for models of fragments of $\ZFC$, particularly of $\ZF+\DC_\delta$ and of $\ZFC^-$, and we obtain both positive and negative results.

Laver's proof~\cite{laver:groundmodel} that ground models of $\ZFC$ are definable in their set-forcing extensions uses Hamkins' techniques and results on pairs of models with the $\delta$-cover and $\delta$-approximation properties.
\begin{definition}[Hamkins \cite{hamkins:coverandapproximations}] \label{def:coverandextprop}
Suppose $V\subseteq W$ are transitive models of (some fragment of) $\ZFC$ and $\delta$ is a cardinal in $W$.
\begin{itemize}
\item[(1)] The pair $V\subseteq W$ satisfies the $\delta$-\emph{cover property} if for every $A\in W$ with $A\subseteq V$ and $|A|^W<\delta$, there is $B\in V$ with $A\subseteq B$ and $|B|^V<\delta$.
\item[(2)] The pair $V\subseteq W$ satisfies the $\delta$-\emph{approximation property} if  whenever $A\in W$ with $A\subseteq V$ and $A\cap a\in V$ for every $a$ of size less than $\delta$ in $V$, then $A\in V$.
\end{itemize}
\end{definition}

\noindent Pairs of the form the ground model with its forcing extension, $V\subseteq V[G]$, satisfy the $\delta$-cover and $\delta$-approximation properties for any cardinal $\delta\geq\gamma^+$, where $\gamma$ is the size of the forcing poset. This fact is proved in \cite{laver:groundmodel}, and it is an immediate corollary of  Lemma 13 of \cite{hamkins:coverandapproximations}, which easily generalizes to the following theorem.
\begin{theorem}[Hamkins]\label{th:gapcoverapprox}
Suppose  $\delta$ is a cardinal and $\p$ is a poset which factors as $\R*\dot \q$, where $\R$ is nontrivial\footnote{Here, and elsewhere in this article, a poset is \emph{nontrivial} if it necessarily adds a new set.}  of size less than $\delta$ and $\forces_\R \dot \q\text{ is strategically }\lt\delta\text{-closed}$. Then the pair $V\subseteq V[G]$ satisfies the $\delta$-cover and $\delta$-approximation properties for any forcing extension $V[G]$ by $\p$.
\end{theorem}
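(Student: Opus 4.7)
My plan is to factor $G=G_\R * H$ with $G_\R$ a $V$-generic for $\R$ and $H$ a $V[G_\R]$-generic for $\q := \dot\q^{G_\R}$, exploiting $|\R|^V<\delta$ on the first factor and strategic $\lt\delta$-closure of $\q$ on the second. For the $\delta$-cover property, given $A\in V[G]$ with $A\subseteq V$ and $|A|^{V[G]}<\delta$, pick in $V[G]$ an enumeration $f\colon \alpha\to A$ of length $\alpha<\delta$. Since $\q$ is strategically $\lt\delta$-closed in $V[G_\R]$, no new $\alpha$-sequence of elements of $V[G_\R]$ is added by $H$, so $f\in V[G_\R]$. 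In $V$, pick any $\R$-name $\dot f$ for $f$ and set
\[
B=\{\,x\in V : \exists\,r\in\R\ \exists\,\xi<\alpha\ \ r\forces \dot f(\check\xi)=\check x\,\}.
\]
Then $B\in V$, $A\subseteq B$, and $|B|^V\leq |\R|^V\cdot\alpha<\delta$.

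For the $\delta$-approximation property, suppose $A\in V[G]$ with $A\subseteq V$ and $A\cap a\in V$ for every $a\in V$ of size $<\delta$ in $V$. I would argue $A\in V[G_\R]$ first, then $A\in V$. To handle the first step, note that the just-proved cover property upgrades the hypothesis to $A\cap a\in V[G_\R]$ for every $a\in V[G_\R]$ of size $<\delta$ in $V[G_\R]$: cover such an $a$ by some $a'\in V$ of size $<\delta$, and observe $A\cap a=(A\cap a')\cap a\in V[G_\R]$. Then invoke the principle that strategically $\lt\delta$-closed forcing preserves $\delta$-approximation: assuming $A\notin V[G_\R]$, use player II's winning strategy in the closure game on $\q$ to inductively build a splitting tree of conditions whose successors disagree on test elements, producing after fewer than $\delta$ stages a common test set $a\in V[G_\R]$ of size $<\delta$ on which two branches force distinct values of $\dot A\cap\check a$, contradicting the upgraded hypothesis. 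Hence $A\in V[G_\R]$.

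For the second step, fix a $V$-set $Y$ with $A\subseteq Y$ (possible since the ordinal ranks of elements of $A$ are bounded) and an $\R$-name $\dot A$ for $A$. Let $\mathcal S_Y=\{\,S\in\mathcal P(Y)^V : \exists\,r\in\R\ \ r\forces \dot A\cap\check Y=\check S\,\}$. For distinct $S_1,S_2\in\mathcal S_Y$ the witnessing conditions are incompatible, so $\mathcal S_Y$ injects into an antichain of $\R$; hence $|\mathcal S_Y|^V\leq|\R|^V<\delta$. Choose in $V$ a distinguishing set $W\subseteq Y$ of size $<\delta$ meeting $S_1\triangle S_2$ for every distinct pair from $\mathcal S_Y$. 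By hypothesis $A\cap W\in V$, and $A\cap Y$ is identified in $V$ as the unique $S\in\mathcal S_Y$ with $S\cap W=A\cap W$; so $A=A\cap Y\in V$.

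The principal obstacle is the invoked claim that strategically $\lt\delta$-closed forcing preserves $\delta$-approximation: one must deploy player II's strategy across a branching tree of $\lt\delta$-length plays so that the accumulated test elements form a common set of size $<\delta$ yielding the needed disagreement on $\dot A\cap\check a$. The cover property and the $\R$-step of approximation otherwise reduce to standard nice-name and antichain manipulations.
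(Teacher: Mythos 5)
Your $\delta$-cover argument is fine, but the decomposition you propose for the $\delta$-approximation property does not work, and the failure is not a technical obstacle to be overcome but a genuinely false intermediate claim. It is \emph{not} true that strategically $\lt\delta$-closed forcing has (or preserves) the $\delta$-approximation property over the model it is applied to: the standard counterexample is forcing to add a cofinal branch through a $\kappa$-Souslin tree, which can be arranged to be strategically $\lt\kappa$-closed, yet the new branch is a subset of the ground model all of whose intersections with ground-model sets of size $\lt\kappa$ lie in the ground model. Your splitting-tree argument cannot repair this: after building the tree and collecting the test points into a single small set $a$, each branch's fused condition decides $\dot A\cap\check a$ to be some particular set, and every one of these candidate values is itself a ground-model set --- so the hypothesis ``$\dot A\cap\check a$ belongs to the ground model'' is satisfied on every branch and no contradiction appears. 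The tell-tale sign is that your argument never uses the nontriviality of $\R$, whereas the theorem is false without that hypothesis (take $\R$ trivial and $\q$ the Souslin-tree forcing). The paper does not prove this theorem itself; it cites Hamkins' Key Lemma, whose proof treats the pair $V\of V[G]$ directly rather than factoring through $V[G_\R]$: one uses the strategic closure of $\dot\q$ to run a \emph{single} play of the game of length roughly $|\R|<\delta$ that handles all conditions of $\R$ simultaneously, arranging that the resulting small approximation $\dot A\cap\check a$ encodes the $\R$-generic filter; since $\R$ is nontrivial its generic is not in $V$, so this approximation cannot lie in $V$, which is the desired contradiction. The two factors must be intertwined in this way; they cannot be handled one at a time.

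There is also a smaller circularity in your final step (small forcing has the $\delta$-approximation property). You identify $A\cap Y$ as the unique $S\in\mathcal S_Y$ with $S\cap W=A\cap W$, but membership of $A$ in $\mathcal S_Y$ presupposes that some condition forces $\dot A\cap\check Y=\check S$ with $S=A\in V$, which is what you are trying to prove; a priori no condition in $G_\R$ need fully decide $\dot A\cap\check Y$. The standard fix: for each $r\in\R$ not fully deciding $\dot A\cap\check Y$, pick a point $y_r\in Y$ undecided by $r$, let $W=\{y_r\}$ (of size at most $|\R|<\delta$), and use the hypothesis $A\cap W\in V$ to find $r_0\in G_\R$ forcing $\dot A\cap\check W$ to equal a fixed ground-model set; such an $r_0$ must fully decide $\dot A\cap\check Y$, since otherwise it fails to decide $y_{r_0}\in W$. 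This part is repairable; the middle step is not.
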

\begin{theorem}[Hamkins, see \cite{laver:groundmodel}]\label{th:coverandapprox}
Suppose $V$, $V'$ and $W$ are transitive  models of $\ZFC$, $\delta$ is a regular cardinal in $W$, the pairs $V\subseteq W$ and $V'\subseteq W$ have the $\delta$-cover and $\delta$-approximation properties, $P(\delta)^V=P(\delta)^{V'}$, and $(\delta^+)^V=(\delta^+)^W$. Then $V=V'$.
\end{theorem}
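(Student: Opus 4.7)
The plan is to prove $V = V'$ by showing, via induction on cardinals $\kappa \geq \delta^+$, that $H_\kappa^V = H_\kappa^{V'}$; since $V = \bigcup_\kappa H_\kappa^V$ and similarly for $V'$, this suffices. First I would record two standing consequences of the hypotheses. Because $P(\delta)^V = P(\delta)^{V'}$, the two models share the same well-orderings of $\delta$ (coded via G\"odel pairing as subsets of $\delta$), so $(\delta^+)^V = (\delta^+)^{V'}$; combined with $(\delta^+)^V = (\delta^+)^W$, this gives $(\delta^+)^V = (\delta^+)^{V'} = (\delta^+)^W$. Moreover, the $\delta$-cover and $\delta$-approximation properties imply that $V$, $V'$, and $W$ share all cardinals $\geq \delta^+$, so the inductive hierarchy is unambiguous.

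For the base case $\kappa = \delta^+$: every $x \in H_{\delta^+}^V$ is coded by a well-founded extensional relation $E \subseteq \delta\times\delta$ on some ordinal $\leq \delta$. Via G\"odel pairing $E$ corresponds to an element of $P(\delta)^V = P(\delta)^{V'}$, so $E \in V'$; since Mostowski collapse is absolute for transitive models of $\ZF$, decoding $E$ inside $V'$ produces the same set $x$, and hence $x \in V'$. By symmetry, $H_{\delta^+}^V = H_{\delta^+}^{V'}$.

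For the inductive step, fix a cardinal $\kappa > \delta^+$. If $\kappa$ is a limit cardinal, $H_\kappa = \bigcup_{\kappa'<\kappa} H_{\kappa'}$ and the claim is immediate from the inductive hypothesis. If $\kappa = \lambda^+$ is a successor, Mostowski encoding (now on $\lambda$) reduces the claim to $P(\lambda)^V = P(\lambda)^{V'}$. Given $A \in V$ with $A \subseteq \lambda$, I would apply the $\delta$-approximation property of $V' \subseteq W$, checking $A \cap a \in V'$ for every $a \in V'$ with $|a|^{V'} < \delta$; without loss of generality $a \subseteq \lambda$ is a set of ordinals. When $a$ is bounded below $\lambda$, say $a \subseteq \mu < \lambda$, the inductive hypothesis gives $A \cap \mu \in H_\lambda^V = H_\lambda^{V'} \subseteq V'$, and hence $A \cap a = (A \cap \mu) \cap a \in V'$.

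The delicate remaining case is $a$ unbounded in $\lambda$, which forces $\lambda$ to be singular in $V'$ with $\mathrm{cf}^{V'}(\lambda) < \delta$. I would handle this by fixing a cofinal sequence $\langle \lambda_i : i < \nu\rangle \in V'$ of length $\nu < \delta$, decomposing $a = \bigcup_i (a \cap \lambda_i)$ into bounded pieces, and applying the previous paragraph to obtain $A \cap (a\cap\lambda_i) \in V'$ for each $i$. The technical heart of the proof is then to reassemble these pieces into $A\cap a \in V'$, since the sequence $\langle A \cap \lambda_i : i < \nu\rangle$ is a priori only known to lie in $V$; this requires a combined use of the $\delta$-cover property of $V \subseteq W$ (to pass from arbitrary $V'$-small sets to $V$-small covers, where the inductive hypothesis applies) together with a second application of the $\delta$-approximation property to the approximations themselves. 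I expect this reassembly in the singular-cofinality case to be the main technical obstacle of the proof.
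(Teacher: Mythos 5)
There is a genuine gap, and it sits exactly where you flag ``the main technical obstacle'': the reassembly step cannot be carried out with the tools you name. The $\delta$-approximation property gives no information about sets of size less than $\delta$ --- for such a set the hypothesis of the approximation property is essentially circular, since among the small $a\in V'$ you must test against is (a superset of) the set itself --- and the $\delta$-cover property produces covers lying in \emph{one} of the two models only. So neither ``a second application of the approximation property to the approximations themselves'' nor covering $a$ by some $b\in V$ lets you move the sequence $\langle A\cap a\cap\lambda_i : i<\nu\rangle$ (or the function $i\mapsto A\cap\lambda_i$) from $W$ into $V'$. The same difficulty already defeats the more basic claim your argument silently needs: that a set of ordinals of size less than $\delta$ lying in $V$ and contained in some $a\in V'$ has its trace $A\cap a$ in $V'$.

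The missing ingredient is the common-cover lemma that is the heart of the Laver--Hamkins argument (Lemma 1.1 of \cite{laver:groundmodel}; it reappears as Claim 1 in this paper's proof of the $\ZF$ analogue, Theorem~\ref{th:coverandapproxzf}). Given any $A\in W$ of size less than $\delta$ contained in a transitive set $T\in V\cap V'$, one builds a chain $A\subseteq B_0\subseteq B_1\subseteq\cdots\subseteq B_\xi\subseteq\cdots$ of length $\delta$, alternately applying the $\delta$-cover property of $V\subseteq W$ and of $V'\subseteq W$, so that cofinally many $B_\xi$ lie in $V$ and cofinally many in $V'$. The union $B$ has size $\delta$ in $W$; by regularity of $\delta$ every small set is captured by some $B_\xi$, so the $\delta$-approximation property puts $B$ into \emph{both} $V$ and $V'$. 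The hypothesis $(\delta^+)^V=(\delta^+)^W$ --- which your proof never uses in the hard case, a telling sign --- then guarantees $|B|^V\le\delta$, so $B$ admits a bijection onto some $\delta'\le\delta$ in $V$; that bijection is coded by a subset of $\delta$ and hence, by $P(\delta)^V=P(\delta)^{V'}$, lies in $V'$ as well. With a common cover and common enumeration in hand, $A\in V\Leftrightarrow f\image A\in P(\delta)^V=P(\delta)^{V'}\Leftrightarrow A\in V'$ for small sets, and the approximation property then handles arbitrary subsets of $T$ exactly as in your successor step. Your skeleton (base case at $\delta^+$ via coding, reduction of large sets to small intersections) is sound and consistent with the standard proof, but without this back-and-forth construction the small-set case --- and with it your singular-cofinality case --- does not close.
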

Laver's proof of Theorem~\ref{th:zfcgroundmodeldefinable} proceeds by combining his weak version of Theorem~\ref{th:gapcoverapprox} with Hamkins' uniqueness Theorem \ref{th:coverandapprox} as follows. A forcing extension $V[G]$ by a poset $\p$ of size $\gamma$ has the $\delta$-cover and $\delta$-approximation properties for $\delta=\gamma^+$, and moreover it holds that $(\delta^+)^V=(\delta^+)^{V[G]}$. It is not difficult to see that there is an unbounded definable class $C$ of ordinals such that for every $\lambda\in C$, the $\delta$-cover and $\delta$-approximation properties reflect down to the pair $V_\lambda\subseteq V[G]_\lambda$ and both $V_\lambda$ and $V[G]_\lambda$ satisfy a large enough fragment of $\ZFC$, call it $\ZFC^*$, for the proof of Theorem~\ref{th:coverandapprox} to go through. Letting $s=P(\delta)^V$, the sets $V_\lambda$, for $\lambda\in C$, are then defined in $V[G]$ as the unique transitive models $M\models \ZFC^*$ of height $\lambda$, having $P(\delta)^M=s$ such that  the pair $M\subseteq V[G]_\lambda$ has the $\delta$-cover and $\delta$-approximation properties. Finally,  we can replace the parameter $s=P(\delta)^V$ with $P(\gamma)^V$ by observing that $P(\gamma)^V$ is definable from $P(\delta)^V$ in $V[G]$ using the delta-approximation property (see Section~\ref{sec:zfcminus} in the paragraph before Theorem 4.1 for the argument).

Forcing constructions over models of $\ZF$ can be carried out in some overarching $\ZFC$ context because the essential properties of forcing such as the definability of the forcing relation and the Truth Lemma do not require choice. Also, forcing over models of $\ZF$ preserves $\ZF$ to the forcing extension.\footnote{All these facts follow by examining Shoenfield's proofs of them in \cite{shoenfield:forcing} for the $\ZFC$ context. Since maximal antichains need not exist without choice, generic filters must meet all dense subsets.} Is every model of $\ZF$ definable in its set-forcing extensions? Although at the outset, it might appear that the  $\delta$-cover and $\delta$-approximation properties machinery, used to prove the definability of $\ZFC$-ground models, isn't applicable to models without full choice, we will show that much of it can be salvaged with only a small fragment of choice. In Section~\S\ref{sec:zf}, we prove an analogue of Theorem~\ref{th:coverandapprox}  for models of $\ZF+\DC_\delta$ (Theorem~\ref{th:coverandapproxzf}) and derive from it a partial definability result for ground models of $\ZF+\DC_\delta$ and forcing extensions by posets admitting a gap at $\delta$. Posets admitting a gap at $\delta$ are particularly suited to forcing over models of $\ZF+\DC_\delta$ because they also preserve $\DC_\delta$ to the forcing extension (Theorem~\ref{th:dcpreservation}).

\Levy~\cite{levy:DCkappa} introduced the dependent choice axiom variant $\DC_\delta$, for an ordinal $\delta$, asserting that for any nonempty set $S$ and any binary relation $R$, if for each sequence $s \in S^{\lt\delta}$ there is a $y\in S$ such that $s$ is $R$-related to $y$, then there is a function $f:\delta\to S$ such that $f\restrict\alpha \;R f(\alpha)$ for each $\alpha<\delta$. It is easy to see that $\DC_\delta$ implies the choice principle $\AC_\delta$, the assertion that indexed families $\{A_\xi\mid \xi<\delta\}$ of nonempty sets have choice functions. The full $\AC$  is clearly equivalent to the assertion $\forall\delta \;\DC_\delta$, while $\AC_\delta$ is much weaker than $\DC_\delta$, as it provides choice functions only for already well-ordered families of nonempty sets.\footnote{Indeed, for any fixed $\delta$, the principle $\AC_\delta$ does not imply $\DC_\omega$, while the assertion $\forall\delta \AC_\delta$ does imply $\DC_\omega$ but not $\DC_{\omega_1}$ (see Chapter~8 in ~\cite{Jech1973:AxiomChoice}).} Some of the natural models of $\ZF+\DC_\delta$ arise as symmetric inner models of forcing extensions and models of the form $L(V_{\delta+1})$. In \cite{hamkins:gapforcing}, Hamkins defined that a poset $\p$ \emph{admits a gap} at a cardinal $\delta$ if it factors as $\R*\dot \q$, where $\R$ is nontrivial forcing of size less than $\delta$, and it is forced by $\R$ that $\dot \q$ is strategically $\lesseq\delta$-closed. By Theorem~\ref{th:gapcoverapprox}, a $\ZFC$ ground model with a forcing extension by a poset admitting a gap at a cardinal $\delta$ satisfy the $\delta$-cover and $\delta$-approximation properties. Indeed the analogous result for $\ZF+\DC_\delta$ holds as well (Theorem~\ref{th:forcingcoverandapproxzf}).
\begin{main1}\label{main1}
Suppose $V$ is a model of $\ZF+\DC_\delta$, $\p\in V$ is a forcing notion admitting a gap at $\delta$, and $G\subseteq\p$ is $V$-generic. Then in $V[G]$, the ground model $V$ is definable from the parameter $P(\delta)^V$.
\end{main1}

Models of the theory $\ZFC^-$, known as set theory without powerset, are used widely throughout set theory. Typically, but  not necessarily, these have a largest cardinal $\kappa$. The canonical ones are models $\her{\kappa}$, which are collections of all sets of hereditary size at most $\kappa$ for some cardinal $\kappa$. Models of $\ZFC^-$ also play a prominent role in the theory of smaller large cardinals, many of which, such as weakly compact, remarkable, unfoldable, and Ramsey cardinals, are characterized by the existence of elementary embeddings of $\ZFC^-$ models. While set theorists often think of $\ZFC^-$ as simply the axioms of $\ZFC$ with the powerset axiom removed, the situation is more complex. Indeed, removing the powerset axiom from $\ZFC$ has many surprising consequences as illustrated by the work of Zarach. Zarach showed that without the powerset axiom, the collection and replacement schemes are not equivalent, and that the axiom of choice does not imply that every set can be well-ordered \cite{Zarach1996:ReplacmentDoesNotImplyCollection,zarach:unions_of_zfminus_models}. Together with Hamkins, we continued Zarach's project in our article \cite{zfcminus:gitmanhamkinsjohnstone}, whose theme was the importance of including collection and not just replacement in what we understand to be set theory without powerset. We showed that a number of crucial set-theoretic results, such as the \Los\ Theorem for ultrapowers, or Gaifman's theorem that a $\Sigma_1$-elementary cofinal embedding is fully elementary, may fail for models of replacement but not collection in the absence of powerset. In light of these facts, we define $\ZFC^-$ as in~\cite{zfcminus:gitmanhamkinsjohnstone} to mean the theory $\ZFC$ without the powerset axiom, with the replacement scheme replaced by the collection scheme and with the axiom of choice replaced by the assertion that every set can be well-ordered.

Forcing over models of $\ZFC^-$ preserves $\ZFC^-$ to the forcing extension and the rest of standard forcing machinery carries over as well.\footnote{It is an open question whether forcing extensions of models of set theory without powerset where replacement is used in place of collection continue to satisfy replacement.} However, in Section~\S\ref{sec:zfcminus}, we show that Laver's ground model definability result cannot be generalized to $\ZFC^-$ ground models. Using forcing, we produce a $\ZFC$ universe with a cardinal $\kappa$ such that ground model definability fails for $\her{\kappa}$. In this case, ground model definability is violated in the strongest possible sense because $\her{\kappa}$ has a set-forcing extension in which it is not definable even using a parameter from the extension. We can set up the preparatory forcing so that $\kappa$ is any ground model cardinal and so that the forcing extension violating ground model definability is by a poset of the form $\Add(\delta,1)$\footnote{We denote by $\Add(\delta,\gamma)$, where $\delta$ is an infinite cardinal and $\gamma$ is any cardinal, the poset which adds $\gamma$-many Cohen subsets to $\delta$, using conditions of size less than $\delta$.} for some regular cardinal $\delta<\!<\kappa$. It will follow from our arguments that there is always a countable transitive model of $\ZFC^-$ violating ground model definability.
\begin{main2}\label{th:zfcminusundefinable}
Assume that $\delta, \kappa$ are cardinals such that $\delta$ is regular and either $2^{\lt\delta}\!<\!\kappa$\, or \,$\delta\!=\!\kappa\!=\!2^{\lt\kappa}$ holds. If $V[G]$ is a forcing extension by $\Add(\delta,\kappa^+)$,
then $\her{\kappa}^{V[G]}$ is not definable in its forcing extension by $\Add(\delta,1)$. It follows that there is a countable transitive model of $\ZFC^-$ that is not definable in its Cohen forcing extension.
\end{main2}
\noindent For instance, it follows that it is consistent for $H_{\omega_2}$ to fail to be definable in its Cohen forcing extension $H_{\omega_2}[g]$.

The proof method of Main~Theorem~2 has an interesting consequence for ground model definability of $\ZFC$ models. We noted earlier that the natural parameter $P(|\p|^+)$ can be improved to $P(|\p|)^V$, but we will show that it cannot be improved any further.
\begin{theorem}\label{th:parameterlowerbound}
It is consistent that a ground model $V$ cannot be defined in its Cohen extension $V[g]$ using any parameter of hereditary size less than $2^\omega$.
\end{theorem}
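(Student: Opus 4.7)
The plan is to transfer the failure of definability in Main~Theorem~2 --- which concerns $H_\kappa^{V[G]}$ --- up to the whole model $V$, by restricting any putative small-parameter definition of $V$ to a suitable $H_\lambda^V$. I will build $V$ so that $2^\omega$ is large and so that $V$ factors as an $\Add(\omega,\lambda^+)$-extension in many ways. Concretely, start with $W\models\ZFC+\GCH$ containing an inaccessible cardinal $\theta$, and force with $\Add(\omega,\theta)^W$ to obtain $V:=W[G]$, in which $2^\omega=\theta$. The product isomorphism $\Add(\omega,\theta)\cong\Add(\omega,\lambda^+)\times\Add(\omega,\theta)$, valid for any cardinal $\lambda<\theta$ (using that $\theta$ is a limit cardinal), exhibits $V$ as an $\Add(\omega,\lambda^+)^{V'}$-extension of an intermediate model $V'\subseteq V$.

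For the main argument, fix a $V$-generic $g\subseteq\Add(\omega,1)$ and suppose toward a contradiction that $V$ is defined in $V[g]$ by a formula $\varphi(x,y)$ from a parameter $p\in V[g]$ whose hereditary size in $V[g]$ is less than $2^\omega=\theta$. The strategy is to choose a cardinal $\lambda$ with $|\mathop{\text{TC}}(p)|<\lambda<\theta$ for which $\varphi$ is absolute between $H_\lambda^{V[g]}$ and $V[g]$ on parameters from $H_\lambda^{V[g]}$. Since Cohen forcing is small, $H_\lambda^V[g]=H_\lambda^{V[g]}$; and since Cohen forcing preserves cardinals, $V\cap H_\lambda^{V[g]}=H_\lambda^V$. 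The chosen reflection therefore yields
\[
H_\lambda^V \;=\; \{\, x\in H_\lambda^{V[g]} : H_\lambda^{V[g]}\models\varphi(x,p)\,\},
\]
defining $H_\lambda^V$ inside $H_\lambda^V[g]$ from the parameter $p\in H_\lambda^V[g]$. But Main~Theorem~2, applied with $\delta=\omega$ and $\kappa=\lambda$ to the factorization $V=V'[G_0]$, asserts that $H_\lambda^V$ is not definable in $H_\lambda^V[g]$ from any parameter whatsoever, a contradiction.

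The hard part will be the reflection step. L\'evy--Montague reflection supplies unboundedly many $\lambda$ with $V_\lambda^{V[g]}\prec_\varphi V[g]$, but $V_\lambda$ and $H_\lambda$ coincide only at strong-limit cardinals, and because $(2^\omega)^{V[g]}=\theta$ there are no strong limits strictly between $\omega$ and $\theta$ in $V[g]$. I would address this either by reducing to a $\varphi$ of fixed $\Sigma_n$ complexity --- composing with the Laver--Woodin formula $\varphi_{LW}$ that defines $V$ from $P(\omega)^V$, so as to put any given definition of $V$ in a normal form --- and invoking a cofinal-below-$\theta$ supply of $\Sigma_n$-correct cardinals admitting $H$-reflection, or by strengthening the large-cardinal hypothesis on $\theta$ (say, Mahlo or reflecting in $W$) so that enough $H_\lambda$-reflection survives the Cohen forcing. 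Once the reflecting $\lambda$ is secured, Main~Theorem~2 closes the argument immediately.
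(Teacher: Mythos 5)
Your overall architecture has a genuine gap at exactly the place you flag: the reflection step. The formula $\varphi$ is handed to you by the adversary \emph{after} $V$ is fixed, so its quantifier complexity $n$ is not bounded in advance. For a fixed $n$ one can indeed show that $\{\lambda : H_\lambda^{V[g]}\prec_{\Sigma_n}V[g]\}$ is a closed unbounded class of cardinals, but you need such $\lambda$ \emph{below} $\theta$, and for that $\theta$ must be a limit of $\Sigma_n$-correct-for-$H$ cardinals for \emph{every} $n$ simultaneously. That amounts to the scheme ``$H_\theta\prec V[g]$,'' which is not provable in ZFC for any definable $\theta$ and is not delivered by inaccessibility, Mahloness, or a ($\Sigma_2$-)reflecting cardinal; it can be arranged consistently (it is a conservative scheme over ZFC), but you would then also have to verify its preservation through $\Add(\omega,\theta)\times\Add(\omega,1)$, none of which is done. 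Your fallback of composing with the Laver--Woodin formula does not help either: it does not reduce the complexity of the given definition $\varphi(x,p)$, since the relevant set $P(\omega)^V=\{x\subseteq\omega : \varphi(x,p)\}$ still has whatever complexity $\varphi$ has. There is also a bookkeeping slip in the final step: Main Theorem 2 with $\kappa=\lambda$ concerns $H_{\lambda^+}$ (the paper's $\her{\lambda}$), not $H_\lambda$, so the reflecting cardinal and the factorization have to be aligned accordingly.

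The irony is that the detour through $H_\lambda$ is unnecessary, and dropping it removes the gap. The paper's proof simply arranges $\mathrm{CH}$: force over a CH model with $\Add(\omega,\omega_1)$, so that $2^\omega=\omega_1$ persists and ``hereditary size less than $2^\omega$'' means ``coded by a real.'' Then the automorphism argument from the proof of Main Theorem 2 is run \emph{directly} in $V[G][g]$: the nice name for the parameter has countable support among the $\omega_1$ coordinates, some coordinate $\gamma$ is free, and flipping the bits of $g_\gamma$ according to $g$ shows that a definition of $P(\omega)^{V[G]}$ from such a parameter would put $g$ into $V[G]$. No reflection, no large cardinal, no appeal to Main Theorem 2 as a black box. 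Your own setup $V=W[G]$ with $G\subseteq\Add(\omega,\theta)$ would work the same way: a parameter of hereditary size ${<}\,\theta$ has a name supported on fewer than $\theta$ coordinates (by the ccc and regularity of $\theta$), leaving a free coordinate for the bit-flipping automorphism. The lesson is that the only role of the hereditary-size hypothesis is to bound the support of the parameter's name, and that is exploited at the level of the full models, not inside some $H_\lambda$.
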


\section{ZF-Forcing preliminaries}\label{sec:preliminaries}
Many of the concepts from the standard forcing toolbox use the axiom of choice in both obvious and subtle ways. For instance, nice names, which play a crucial role in forcing constructions, need not exist in choiceless models. Also, without choice, the concept of closure of a forcing notion loses much of its potency because a nontrivial infinite poset may, for instance, be vacuously countably closed simply because there are no infinite descending chains. Indeed, it is not difficult to see that the assertion that $\lesseq\delta$-closed forcing does not add new $\delta$-sequences of ground model sets is actually equivalent to $\DC_\delta$. Another issue which arises when forcing over models satisfying only a fragment of choice is that this fragment need not be preserved to the forcing extension. For instance, Monro showed that it is possible to have a model of $\ZF+\DC_\delta$ for a cardinal $\delta>\!>\omega$ that has a  forcing extension that does not even satisfy $\AC_\omega$ \cite{monro:preservingchoicezf}. In this section, we will briefly discuss how to adapt certain forcing related concepts, such as nice names and full names, to the choiceless setting. We will also show that posets admitting a gap at $\delta$ preserve $\DC_\delta$ to the forcing extension. This is critical for our results because the generalized uniqueness theorem (Theorem~\ref{th:coverandapproxzf}), which we prove in Section~\ref{sec:zf}, requires all three models to satisfy $\DC_\delta$, and so it can only apply to models $V,V'\subseteq V[G]$ provided that $V[G]\models\DC_\delta$.

Suppose $\p$ is a poset in a model $V\models \ZF$ and $\sigma$ is a $\p$-name. As a natural replacement for nice names, we define that a \emph{good name} for a subset of $\sigma$ is any $\p$-name $\tau$ such that $\tau\of\dom(\sigma)\times\p$. It is easy to see that good names share the defining property of nice names, namely that if $\sigma,\mu$ are $\p$-names, then there is a good $\p$-name $\tau$ for a subset of $\sigma$ such that $\one\forces \left(\mu\subseteq\sigma\to \mu=\tau\right)$. Therefore, good names can be used instead of nice names in the construction of the canonical names for the $V_\alpha$-hierarchy of the forcing extension. We define these by recursion as follows: $\sigma_0=\emptyset$, $\sigma_{\alpha+1}=\{\la\tau,\one\ra\mid \tau \text{ is a good name for a subset of }\sigma_\alpha\}$, and $\sigma_\lambda=\bigcup_{\alpha<\lambda} \sigma_\alpha$ for limit ordinals $\lambda$. It then follows that $(\sigma_\alpha)_G=V[G]_\alpha$ for every $V$-generic filter $G$. Moreover, assuming we used a flat pairing function for constructing $\p$-names,\footnote{A \emph{flat pairing function} is a way of defining ordered pairs which ensures that if $a,b\in V_\alpha$, then so does the ordered pair of $a$ and $b$.} we get that $\sigma_\alpha\subseteq V_\alpha$ for every $\alpha\geq \gamma\cdot\omega$, where $\gamma$ is the rank of $\p$, and hence $V_\alpha[G]=V[G]_\alpha$ for all sufficiently large $\alpha$. Having the canonical names $\sigma_\alpha$ and knowing that good names suffice to represent all subsets of $\sigma_\alpha$, we get that for any $\p$-name $\sigma$, there is an ordinal $\gamma$ such that whenever $p\in\p$ is a condition and $\mu$ is a $\p$-name such that $p\forces \mu\in\sigma$, then there is another $\p$-name $\tau\in V_\gamma$ such that $p\forces \mu=\tau$.

There are a few different approaches to defining a two-step forcing iteration $\p*\dot \q$ in $\ZFC$, all of which can be shown to have the desired properties in the absence of choice as well. For concreteness, we use full names. For a poset $\p$, a $\p$-name $\tau$ is called \emph{full} if $\tau=\text{dom}(\tau)\times\{\one\}$ and whenever $p\in\p$ and $\sigma$ is a $\p$-name such that $p\forces\sigma\in\tau$, then there is a $\sigma'\in\text{dom}(\tau)$ such that $p\forces\sigma=\sigma'$. In models of $\ZFC$, restricting to full names comes without a loss, since for every $\p$-name $\tau$ such that $\one\forces \tau\neq \emptyset$, there is a full name $\tau'$ such that $\one\forces \tau=\tau'$. The argument to see this uses nice names, the canonical names $\sigma_\alpha$, and the technique of \emph{mixing} to verify that whenever $p\forces \sigma\in\tau$, then there is a another name $\sigma'$ such that $p\forces \sigma=\sigma'$ and $\one \forces \sigma'\in\tau$. In models of $\ZF$, good names again take on the role of nice names in this argument and,  provided that for some fixed name $\tau_0$, we have that $\one\forces\tau_0\in \tau$, we can mix $\tau_0$ and $\sigma$ to create the required name $\sigma'$ without any need for maximal antichains. Thus, in $\ZF$, whenever a $\p$-name $\tau$ has an element $\tau_0$ such that $\one\forces \tau_0\in\tau$, it follows that a full name for $\tau$ exists. Since we can insist that any $\p$-name that is forced by $\one$ to be a poset comes with the top element $\dot \one$, it follows that, in models of $\ZF$, every $\p$-name for a poset is forced by $\one$ to be equal to a full name. Thus, using full names, we define that $\p*\dot \q$, where $\p$ is a poset and $\dot \q$ is a full name for a poset, is the partial order consisting of conditions $(p,\dot q)$, where $p\in\p$ and $q\in\text{dom}(\dot \q)$, ordered so that $(p,\dot q)\leq (p',\dot q')$ whenever $p\leq p'$ and $p\forces \dot q\leq \dot q'$.\footnote{An alternative approach to defining a two-step iteration that does not require $\dot \q$ to be a full name, starts with a proper class of conditions $(p,\dot q)$, where $p\in\p$ and $\one\forces \dot q\in\dot\q$, that is later cut  down to a set using good names together with the names $\sigma_\alpha$. Although this approach appears to skirt the need for a top element, most arguments involving iterations rely on the ability to turn a name $\dot q$ forced by some condition $p$ to be an element of $\dot \q$ into a name $\dot q'$ such that $p\forces \dot q=\dot q'$ and $(p,\dot q')\in \p*\dot\q$, which needs mixing or alternatively a top element.}

With the technical preliminaries out of the way, we now proceed to argue that a general class of posets, extending those admitting a gap at $\delta$, preserve $\DC_\delta$ to the forcing extension.
\begin{theorem} \label{th:DCPreservation1}Suppose that $V\models \ZF+\DC_\delta$ for an ordinal $\delta$ and $\p$ is well-orderable of order type at most $\delta$. Then every  forcing extension $V[G]$ by $\p$ is a model of $\DC_\delta$.\footnote{In the case when $\p$ collapses the cardinality of $\delta$, what really is preserved is $\DC_\gamma$ for $\gamma=|\delta|^{V[G]}$.}
\end{theorem}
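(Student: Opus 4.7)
The plan is to reduce the theorem to a single application of $\DC_\delta$ in $V$. Let $\dot S$ and $\dot R$ be $\p$-names and fix a condition $p \in G$ forcing the $\DC_\delta$-hypothesis for $(\dot S, \dot R)$; by passing below $p$, assume $\one_\p$ forces the hypothesis. Using the preliminary analysis of good names, fix an ordinal $\gamma$ such that every $\p$-name $\mu$ forced by some $q$ to lie in $\dot S$ is $q$-equivalent to a name in $V_\gamma$. Set $S^* = \{\tau \in V_\gamma : \one \forces \tau \in \dot S\}$, a set in $V$, and define $\vec\tau \mathrel{R^*} \tau$ iff $\one \forces \vec\tau \,\dot R\, \tau$, identifying $\vec\tau$ with its canonical sequence-name.

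The key claim is that for every $\vec\tau = \la \tau_\beta : \beta<\alpha\ra \in (S^*)^{<\delta}$ there is $\tau \in S^*$ with $\vec\tau \mathrel{R^*} \tau$. Granting the claim, $\DC_\delta$ in $V$ applied to $(S^*, R^*)$ yields $f:\delta \to S^*$ with $f\restrict\alpha \mathrel{R^*} f(\alpha)$ for every $\alpha<\delta$, and the canonical name $\dot f$ for the sequence $\la f(\alpha) : \alpha<\delta\ra$ is then forced by $\one$ to be the desired $\DC_\delta$-witness in $V[G]$. To prove the claim, fix $\vec\tau$. By the forced hypothesis and density in $V$, below every $q \in \p$ there is some $q' \leq q$ and some $\tau_{q'} \in V_\gamma$ with $q' \forces \vec\tau \,\dot R\, \tau_{q'}$. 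Let $\la p_\xi : \xi < \theta\ra$ enumerate $\p$ via the given well-ordering, with $\theta \leq \delta$. Using $\AC_\theta$, a consequence of $\DC_\delta$ in $V$, for each $\xi<\theta$ pick a pair $(q_\xi, \tau_\xi)$ with $q_\xi \leq p_\xi$ and $q_\xi \forces \vec\tau \,\dot R\, \tau_\xi$. Thin $\{q_\xi : \xi < \theta\}$ to a maximal antichain $A \of \p$ by the canonical recursion through the well-order of $\p$ (include $q_\xi$ iff it is incompatible with every previously chosen element), retaining the corresponding names $\tau_a$ for $a \in A$. The mixing technique discussed in Section~\ref{sec:preliminaries}, carried out via good names, produces a single $\tau^* \in V_\gamma$ with $a \forces \tau^* = \tau_a$ for every $a \in A$; maximality of $A$ then yields $\one \forces \vec\tau \,\dot R\, \tau^*$, completing the claim.

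I expect the main obstacle to be the mixing step, since in the absence of full choice in $V$ one cannot invoke nice names or maximal antichains of arbitrary size. This is handled by three ingredients working together: (a) restricting to $V_\gamma$ via good names, so that there is a set-sized pool of candidate names; (b) exploiting the well-orderability of $\p$ of order type at most $\delta$ to thin to a small maximal antichain and to obtain the needed instance of $\AC_\theta$ from $\DC_\delta$; and (c) performing mixing via good names as outlined in the preliminaries, which bypasses the need for choice in $V$.
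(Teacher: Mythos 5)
Your proposal is correct and follows essentially the same route as the paper's proof: pull the problem back to a set of names in some $V_\gamma$ via good names, define the auxiliary relation $R^*$ on those names, use the well-orderability of $\p$ together with $\AC_\delta$ (a consequence of $\DC_\delta$) to build a maximal antichain of conditions with witnessing names and mix them into a single name, then apply $\DC_\delta$ in $V$ and push the resulting sequence of names forward through the forcing. The only differences are cosmetic (normalizing to the cone below $p$ so that $\one$ forces the hypothesis, and spelling out the thinning of the chosen conditions to a maximal antichain, a detail the paper leaves implicit).
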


\begin{proof}
In $V[G]$, suppose $R$ is a relation and $A$ is a set such that for all $s\in A^{\lt\delta}$ there is a $y\in A$ with $sRy$. Fix $\p$-names $\dot{A}$ and $\dot R$ such that $\dot A_G=A$ and $\dot R_G=R$, and also fix a condition $p\in G$ forcing the hypothesis of $\DC_\delta$ for $\dot{A}$ and $\dot{R}$.  By the previous remarks, we can find an ordinal $\gamma$ such that whenever  $p\forces \sigma\in \Adot$ for some $\p$-name $\sigma$, then there is another $\p$-name $\mu\in V_\gamma$ such that $p\forces \sigma=\mu$. Let $B=\{\sigma\in V_\gamma \st p\forces\sigma\in\Adot\}$. Recall also that if $s=\<\sigma_\xi\st \xi<\alpha>\in V$ is any sequence of $\p$-names, then there is a canonical $\p$-name $\tau_s$ such that $\one\forces``\tau_s\text{ is an }\alpha\text{-sequence}"$ and $\one\forces \tau_s(\xi)=\sigma_\xi$ for all $\xi< \alpha$. We shall define a binary relation $R^*$ on $B^{\lt\delta}\times B$ as follows. For $s\in B^{\lt\delta}$ and $\sigma\in B$, we define that $sR^*\sigma$ whenever $p\forces \tau_s\dot{R}\sigma$. We now argue that the hypothesis of $\DC_\delta$ is satisfied for $B$ and $R^*$. Towards this end, we suppose that $s\in B^{\alpha}$ for some $\alpha<\delta$. By the $\DC_\delta$ hypothesis for $\Adot$ and $\dot{R}$ forced by $p$, it follows that $p\forces \exists x\in\Adot\; \tau_s\dot{R}x$. Since $\p$ is well-orderable, there exists a maximal antichain below $p$ of conditions $q$ such that for some name $\sigma$, we have $q\forces ``\sigma\in \dot A\text{ and } \tau_s \dot R \sigma"$. Now using $\AC_\delta$, for each such $q$ we choose a name $\sigma_q$ and mix to obtain a single $\p$-name $\sigma$ such that $p\forces ``\sigma\in\Adot\text{ and } \tau_s\dot{R}\sigma$". Without loss of generality, we may assume that $\sigma\in V_\gamma$, and so $sR^*\sigma$ as desired. Now applying $\DC_\delta$, there is a sequence $s=\<\sigma_\xi\st\xi<\delta>$ such that $s\restrict \alpha\;R^* \sigma_\alpha$ for each $\alpha<\delta$.  By the definition of $R^*$, it follows that $p\forces \tau_s\restrict \alpha\;\dot{R}\;\tau_s(\alpha)$ for all $\alpha<\delta$. The interpretation $(\tau_s)_G$ is thus the desired $\delta$-sequence witnessing $\DC_\delta$ for $A$ and $R$ in $V[G]$.
\end{proof}

Recall that a poset is \emph{strategically} $\lt\gamma$-\emph{closed} if in the game of ordinal length $\gamma$ in which two players alternatively select conditions from it to construct a descending $\gamma$-sequence with the second player playing at limit stages, the second player has a strategy that allows her to always continue playing; a poset is \emph{strategically} $\lesseq\gamma$-\emph{closed} if the corresponding game has length $\gamma+1$.
\begin{theorem}\label{th:DCPreservation2} Suppose that $V\models \ZF+\DC_\delta$ for an ordinal $\delta$ and $\p$ is a strategically $\lesseq\delta$-closed poset in $V$. Then every forcing extension $V[G]$ by $\p$ is a model of $\DC_\delta$.
\end{theorem}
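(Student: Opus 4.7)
The plan is to show that below any condition $q\leq p$, where $p\in G$ forces the $\DC_\delta$ hypothesis, there is some $q^*\leq q$ forcing the existence of a $\delta$-sequence witnessing $\DC_\delta$ for the given names, so that genericity delivers the required witness in $V[G]$. First I would mirror the opening of the proof of Theorem~\ref{th:DCPreservation1}: fix $\p$-names $\dot A,\dot R$ with $\dot A_G=A$ and $\dot R_G=R$, a condition $p\in G$ forcing the hypothesis, and an ordinal $\gamma$ supplied by the good-name preliminaries of \S\ref{sec:preliminaries} such that every name $\sigma$ with $p\forces\sigma\in\dot A$ is equivalent below $p$ to some name in $V_\gamma$; then set $B=\{\sigma\in V_\gamma\st p\forces\sigma\in\dot A\}$. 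Also fix in $V$ a strategy $\Sigma$ witnessing that $\p$ is strategically $\lesseq\delta$-closed.

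Given $q\leq p$, I would construct in $V$, via a single application of $\DC_\delta$, a descending play $\<q_\xi\st\xi\leq\delta>$ below $q$ together with names $\<\sigma_\xi\st\xi<\delta>$ from $B$ subject to two constraints: at every limit $\lambda\leq\delta$ the condition $q_\lambda$ is the move dictated by $\Sigma$ on the play so far, while at every successor stage $\xi+1<\delta$ the pair $(q_{\xi+1},\sigma_\xi)$ satisfies $q_{\xi+1}\forces \tau_{s_\xi}\,\dot R\,\sigma_\xi$, where $s_\xi=\<\sigma_\eta\st\eta<\xi>$. The successor step is always available because $q_\xi\leq p$ and $p$ forces the hypothesis, so $q_\xi\forces \exists y\in\dot A\ \tau_{s_\xi}\,\dot R\,y$; the forcing theorem then yields some $q_{\xi+1}\leq q_\xi$ and a name that, by the choice of $\gamma$, may be taken in $B$. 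Crucially, at each step we choose only a single extension-plus-name, never a representative above every element of a maximal antichain, so the use of well-orderability of $\p$ and $\AC_\delta$ made in Theorem~\ref{th:DCPreservation1} is avoided and $\DC_\delta$ alone powers the recursion. One small bookkeeping point is to fit our descending play into the alternation schedule of the strategic-closure game; this is handled in the standard way, for example by interleaving trivial moves or by casting the construction as a play in which player I contributes the successor moves while $\Sigma$ dictates the limits.

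Setting $q^*:=q_\delta$, which is the $\Sigma$-move at stage $\delta$ and therefore lies below every $q_\xi$, yields a condition below $q$ that forces $\tau_{\<\sigma_\xi\st\xi<\delta>}$ to interpret as a $\delta$-sequence of elements of $\dot A$ along which $\dot R$-chains extend at every stage, i.e.\ forcing $\DC_\delta$ for $\dot A,\dot R$. Since $q$ was arbitrary below $p$ and $p\in G$, the set of such $q^*$ is dense below $p$ and meets $G$, producing the desired $\DC_\delta$-witness in $V[G]$. The principal obstacle is exactly the absence of full choice in $V$: in Theorem~\ref{th:DCPreservation1} one tamed a maximal antichain via $\AC_\delta$, which is unavailable here, and strategic $\lesseq\delta$-closure is precisely the hypothesis that keeps the construction single-threaded and within reach of $\DC_\delta$.
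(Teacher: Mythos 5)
Your proposal is correct and follows essentially the same route as the paper's proof: a density argument below $p$ in which strategic $\lesseq\delta$-closure replaces the mixing/antichain step of Theorem~\ref{th:DCPreservation1}, with $\DC_\delta$ applied in $V$ to (condition, name) pairs to build the descending play together with the names, and the stage-$\delta$ move of the strategy supplying the final condition. The one small adjustment needed is in your set $B$: the names $\sigma_\xi$ produced at successor stages are forced into $\dot A$ only by the stronger conditions $q_{\xi+1}$, not by $p$, so you should choose $\gamma$ to work uniformly for all conditions (as the preliminaries of Section~\ref{sec:preliminaries} allow) and carry the conditions along with the names, exactly as in the paper's definition of $B$ as a set of pairs $\langle r,\sigma\rangle$.
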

\begin{proof}
We shall follow the proof of Theorem~\ref{th:DCPreservation1}, while avoiding the need for mixing (the only place where the well-orderability of $\p$ was used) by using the strategic closure property instead. To simplify the presentation, let us assume first that $\p$ is $\lesseq\delta$-closed and outline at the end of the proof how the argument can be modified in the case when $\p$ is merely strategically $\lesseq\delta$-closed. In $V[G]$, suppose $R$ is a relation  and $A$ is a set such that for all $s\in A^{\lt\delta}$ there is a $y\in A$ with $sRy$. Fix a $\p$-name $\dot{A}$ and $\dot R$ such that $\dot A_G=A$ and $\dot R_G=R$, and also fix a condition $p\in G$ forcing the hypothesis of $\DC_\delta$ for $\dot{A}$ and $\dot{R}$.  Next, we find an ordinal $\gamma$ such that whenever a condition $q \forces \sigma\in \Adot$ for some $\p$-name $\sigma$, then there is another $\p$-name $\mu\in V_\gamma$ such that $q\forces \sigma=\mu$. We shall argue that it is dense below $p$ to have conditions $q$ forcing the existence of a sequence witnessing $\DC_\delta$ for $\dot A$ and $\dot R$. Towards this end, we fix some $q\leq p$. Let $B=\{\la r, \sigma\ra\mid r\leq q, \sigma\in V_\gamma, r\forces\sigma\in\Adot\}$. We shall define a binary relation $R^*$ on $B^{\lt\delta}\times B$ as follows. Suppose $z=\<\<r_\xi,\sigma_\xi>:\xi<\alpha>$ is a sequence of elements of $B$ for some $\alpha<\delta$ and let $s=\<\sigma_\xi :\xi<\alpha>$. If $\<r_\xi:\xi<\alpha>$ is a descending sequence of conditions, then we define that $zR^*\la r,\sigma\ra$ whenever $r$ is below all the $r_\xi$ and $r\forces \tau_s \dot R\sigma$. Otherwise, we define that $zR^*\la r,\sigma\ra$ for every $\la r,\sigma\ra\in B$. We now argue that the hypothesis of $\DC_\delta$ is satisfied for $B$ and $R^*$. If $z=\<\<r_\xi,\sigma_\xi>:\xi<\alpha>$ with $\la r_\xi\mid\xi<\alpha\ra$ descending, then since $\p$ is (much more than) $\lesseq\alpha$-closed, there is a condition $r^*\in\p$ below all the $r_\xi$. It is clear that $r^*\forces \sigma_\xi\in\dot A$ for all $\xi<\alpha$ and thus, $r^*\forces \exists x\in\Adot\; \tau_s\dot{R}x$. In contrast to the proof of Theorem~\ref{th:DCPreservation1}, we cannot use mixing to obtain a witnessing name, but instead we strengthen $r^*$ to a condition $r$ for which there exists a $\p$-name $\sigma$ such that $r\forces ``\sigma\in\Adot\text{ and } \tau_s\dot{R}\sigma$". Without loss of generality, we may assume that $\sigma\in V_\gamma$, and so $z\;R^*\<r,\sigma>$, as desired.

Now applying $\DC_\delta$ in $V$, there is a sequence $z=\<\<r_\xi,\sigma_\xi>:\xi<\delta>$ of elements of $B$  such that $z\restrict \xi\;R^*\;\<r_\xi,\sigma_\xi>$ for each $\xi<\delta$. We let $s=\la \sigma_\xi\mid\xi<\delta\ra$ and consider $\tau_s$. By induction on $\xi$, it is easy to see that $\la r_\xi\mid \xi<\delta\ra$ is a descending sequence of conditions in $\p$ and each $r_\xi\forces \tau_s\restrict \xi\dot R\tau_s(\xi)$.  Since $\p$ is $\lesseq\delta$-closed, there is a condition $r\in\p$ below all the $r_\xi$. By the definition of $R^*$, it follows that $r\forces \tau_s\restrict \xi\;\dot{R}\;\tau_s(\xi)$ for all $\xi<\delta$. Thus, $r\leq q$ forces that there is a sequence witnessing $\DC_\delta$ for $\dot A$ and $\dot R$. This proves the theorem in the case when $\p$ is $\lesseq\delta$-closed.

It is straightforward to modify the argument for the case when $\p$ is merely strategically $\lesseq\delta$-closed, say with winning strategy $\Sigma$ for player II. The definition of $R^*$ has to be modified to insist that the descending sequence $\<r_\xi:\xi<\alpha>$ is built according to $\Sigma$, and when arguing that $R^*$ satisfies the hypothesis for $\DC_\delta$, the condition $r$ below all the $r_\xi$ must be chosen according to the strategy $\Sigma$.
\end{proof}
Theorem \ref{th:DCPreservation1} and \ref{th:DCPreservation2} have the immediate corollary:
\begin{theorem}\label{th:dcpreservation}
Suppose that $V\models \ZF+\DC_\delta$ for a cardinal $\delta$ and $\p\in V$ is a poset which factors as $\R*\dot \q$, where $|\R|\leq\delta$ and $\forces_{\R}``\dot \q\text{ is strategically }\lesseq\delta$-closed. Then every forcing extension $V[G]$ by $\p$ is a model of $\DC_\delta$. In particular, posets admitting a gap at $\delta$ preserve $\DC_\delta$ to the forcing extension.
\end{theorem}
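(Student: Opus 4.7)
The plan is to combine Theorems~\ref{th:DCPreservation1} and~\ref{th:DCPreservation2} by applying each to one stage of the factorization $\p = \R * \dot\q$. First I would decompose the generic $G$ as $G_\R * G_{\dot\q}$, where $G_\R \subseteq \R$ is $V$-generic and $G_{\dot\q}$ is $V[G_\R]$-generic for the interpreted quotient $\q := \dot\q_{G_\R}$.

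For the bottom of the iteration, the hypothesis $|\R| \leq \delta$ in $V \models \ZF$ means that $\R$ injects into $\delta$; transferring the order along this injection yields a well-ordering of $\R$ of order type at most $\delta$. Thus $\R$ satisfies the hypothesis of Theorem~\ref{th:DCPreservation1}, and that theorem applied in $V$ yields $V[G_\R] \models \ZF + \DC_\delta$.

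For the top of the iteration, I would relativize the argument to $V[G_\R]$. By the assumption $\forces_\R$ ``$\dot\q$ is strategically $\lesseq\delta$-closed'', the interpreted poset $\q$ is strategically $\lesseq\delta$-closed in $V[G_\R]$, and by the previous step $V[G_\R]$ is a model of $\ZF + \DC_\delta$. Applying Theorem~\ref{th:DCPreservation2} inside $V[G_\R]$ to the poset $\q$ then yields $V[G_\R][G_{\dot\q}] = V[G] \models \DC_\delta$, as required.

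For the final ``in particular'' clause, any poset admitting a gap at $\delta$ factors as $\R * \dot\q$ with $|\R| < \delta$ (in particular $|\R| \leq \delta$) and $\forces_\R$ ``$\dot\q$ is strategically $\lesseq\delta$-closed'' by definition, so the main statement applies. There is no real obstacle here; the theorem is genuinely an immediate corollary, and the only small point is the routine verification that Theorem~\ref{th:DCPreservation2} may legitimately be invoked inside the intermediate $\ZF + \DC_\delta$ model $V[G_\R]$, which is immediate from its statement.
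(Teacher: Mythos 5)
Your proposal is correct and is exactly the paper's argument: the paper states Theorem~\ref{th:dcpreservation} as an immediate corollary of Theorems~\ref{th:DCPreservation1} and~\ref{th:DCPreservation2}, obtained by applying the former to the well-orderable first factor $\R$ and the latter, relativized to $V[G_\R]\models\ZF+\DC_\delta$, to the interpreted second factor. Your write-up just makes explicit the routine verifications (the order-type bound on $\R$, preservation of $\ZF$ to the intermediate model, and the factorization $V[G]=V[G_\R][G_{\dot\q}]$) that the paper leaves to the reader.
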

\noindent Posets described in the hypothesis of Theorem~\ref{th:dcpreservation}, with the additional assumption that $\R$ is nontrivial, are said to admit a \emph{closure point} at $\delta$. These forcing notions were introduced by Hamkins in \cite{hamkins:coverandapproximations} as a significant generalization of posets admitting a gap at $\delta$. As we already noted, it follows from Theorem~\ref{th:gapcoverapprox} that the ground model and its forcing extension by a poset admitting a gap at $\delta$ have the $\delta$-cover and $\delta$-approximation properties, but if the forcing extension is by a poset admitting a closure point at $\delta$, then we are only guaranteed to have the $\delta^+$-cover and $\delta^+$-approximation properties. Chiefly because of this difference, we succeed in  showing that every ground model of $\ZF+\DC_\delta$ is definable in its set-forcing extensions by posets admitting a gap at $\delta$, while the analogous fact about closure point forcing is an open question (see Section~\ref{sec:questions}).

\section{Definable $\rm ZF$-ground models}\label{sec:zf}
In this section, we show that models of $\ZF+\DC_\delta$ are uniformly definable in their set-forcing extensions  by  posets admitting a gap at $\delta$. Recall that Laver's proof of $\ZFC$ ground model definability combined Hamkins' uniqueness theorem (Theorem~\ref{th:coverandapprox}) with the fact that the ground model and its forcing extension by $\p$ always have the $\delta$-cover and $\delta$-approximation property for $\delta>|\p|$.  Following this strategy, we proceed by first extending the uniqueness theorem to pairs of models of (a fragment of) $\ZF+\DC_\delta$ with the $\delta$-cover and $\delta$-approximation properties. Continuing to follow Laver, this would yield only that a ground model of $\ZF+\DC_\delta$ is definable its forcing extensions by well-ordered posets of size less than $\delta$. But with the help of the following $\ZF+\DC_\delta$ analogue of Theorem~\ref{th:gapcoverapprox}, we are able to significantly expand the class of posets for which $\ZF+\DC_\delta$ ground models are definable, to non-well-orderable posets also.
\begin{theorem}\label{th:forcingcoverandapproxzf}
Suppose $V\models \ZF+\DC_\delta$ for a cardinal $\delta$ and $\p$ is a poset which factors as $\R*\dot \q$, where $\R$ is nontrivial of size less than $\delta$ and $\forces_\R \dot \q$ is strategically $\lt\delta\text{-closed}$. Then the pair $V\subseteq V[G]$ satisfies the $\delta$-cover and $\delta$-approximation properties for any forcing extension $V[G]$ by $\R*\dot\q$. Indeed, if $\delta=\gamma^+$, then $\DC_\gamma$ suffices.
\end{theorem}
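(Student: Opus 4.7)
The plan is to adapt the $\ZFC$ proof of Theorem~\ref{th:gapcoverapprox} (due to Hamkins) to the $\ZF+\DC_\delta$ setting, replacing invocations of the axiom of choice with $\DC_\delta$ and its consequences. Writing $\p=\R*\dot\q$ and $G=G_\R*G_\q$, Theorem~\ref{th:dcpreservation} guarantees that both $V[G_\R]$ and $V[G]$ satisfy $\ZF+\DC_\delta$, so $\DC_\delta$ is available in all three models involved.

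For the $\delta$-cover property, I take $A\in V[G]$ with $A\subseteq V$ and $|A|^{V[G]}=\lambda<\delta$. The strategically ${\smalllt}\delta$-closed tail forcing $\dot\q_{G_\R}$ over $V[G_\R]\models\DC_\delta$ adds no new $\lambda$-sequences of $V[G_\R]$-elements, a fact provable from $\DC_\lambda$; hence any enumeration of $A$ in $V[G]$ already lies in $V[G_\R]$. Because $\DC_\delta$ implies choice for families indexed below $\delta$ and $\R\in V$ has size less than $\delta$, I choose in $V$ good $\R$-names $\dot a_\xi$ for the elements of $A$. The set $\mathrm{Poss}(\xi)=\{x\in V\st\exists r\in\R,\ r\forces\dot a_\xi=\check x\}\in V$ has $V$-size at most $|\R|^V<\delta$ by the $|\R|^+$-chain condition, and $B=\bigcup_{\xi<\lambda}\mathrm{Poss}(\xi)\in V$ covers $A$ with $|B|^V<\delta$.

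For the $\delta$-approximation property, let $A\in V[G]$ with $A\subseteq V$ and $A\cap a\in V$ for every $a\in V$ of $V$-size less than $\delta$. I first argue $A\in V[G_\R]$: by the cover property just shown, every $a'\in V[G_\R]$ of $V[G_\R]$-size less than $\delta$ with $a'\subseteq V$ is covered by some $a\in V$ of $V$-size less than $\delta$, so $A\cap a'=(A\cap a)\cap a'\in V[G_\R]$. Hence $A$ satisfies the $\delta$-approximation hypothesis over $V[G_\R]$ for the strategically ${\smalllt}\delta$-closed tail forcing, and a fusion-style argument executed by $\DC_\delta$-recursion in $V[G_\R]$, with limit steps handled by a winning strategy for player II, yields $A\in V[G_\R]$. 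Next, working inside $V[G_\R]$ below a condition $p\in G_\R$ forcing the hypothesis, a chain-condition analysis of good names (here $\DC_\delta$ well-orders $\R$ and supplies all requisite choices of names along antichains of size less than $\delta$) shows that $A$ is forced by $p$ to coincide with a set from $V$, hence $A\in V$. The sharper statement for $\delta=\gamma^+$ follows because every sequence, antichain, and fusion involved has length or size at most $\gamma$, so $\DC_\gamma$ alone suffices.

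The main obstacle will be verifying the $\delta$-approximation property of strategically ${\smalllt}\delta$-closed forcing over $V[G_\R]\models\DC_\delta$. Hamkins' $\ZFC$ proof constructs, from an assumed counterexample $A\notin V[G_\R]$, a fusion of strategic plays that eventually forces contradictory information about $A$ on a small set; in the $\ZF+\DC_\delta$ setting the choices of strategic moves, witnessing names, and partial decisions made at each of fewer than $\delta$ stages must be packaged into a single binary relation so that $\DC_\delta$ can be applied in one recursion, rather than invoking unrestricted $\AC$ at each stage. Coordinating these choices coherently—especially at limit stages, where player II's winning strategy replaces the ability to select arbitrary lower bounds—is the critical technical subtlety.
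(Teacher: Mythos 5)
Your overall architecture for the $\delta$-approximation property --- first pull $A$ into $V[G_\R]$, then into $V$ --- matches the shape of Lemma~12 of \cite{hamkinsjohnstone:unfoldability}, which is exactly what the paper's proof cites, and your treatment of the cover property and of mixing in $\R$ agrees with the two points of care the authors single out. But the justification you give for the first stage has a genuine gap. You reduce to the statement that $A$ satisfies the $\delta$-approximation hypothesis \emph{over $V[G_\R]$} and then assert that a fusion argument for the strategically $\lt\delta$-closed tail forcing yields $A\in V[G_\R]$. No such fusion argument exists: strategically $\lt\delta$-closed forcing does not in general have the $\delta$-approximation property. For instance, $\Add(\omega_1,1)$ over a model of ${\rm CH}$ is ${\smalllt}\omega_1$-closed and adds a new $A\of\omega_1$ all of whose countable approximations lie in the ground model, since the forcing adds no new countable sequences of ground model sets. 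This is precisely why the theorem requires $\R$ to be nontrivial --- a hypothesis your proposal never invokes, which is a reliable warning sign, since with $\R$ trivial the statement is simply false.

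The correct argument cannot afford to discard the information that the approximations $A\cap a$ lie in $V$ rather than merely in $V[G_\R]$; your weakening of the hypothesis before applying the fusion is what breaks it. In Hamkins' proof the fusion is run over the two-step iteration as a whole: one builds a single descending sequence in the $\dot\q$-coordinate (using the strategy at limits, which is where $\DC_\delta$, or $\DC_\gamma$ when $\delta=\gamma^+$, enters) while surveying all conditions of the small forcing $\R$, and arranges a set $a\in V$ of size less than $\delta$ with the property that, were the tail forcing able to affect membership in $A$ over a fixed $\R$-condition, the ground-model set $A\cap a$ would encode a new set added by $\R$; the nontriviality of $\R$ then yields the contradiction, showing that $A$ is decided by the $\R$-coordinate alone, after which your second stage (the $\delta$-approximation property of the small, well-orderable forcing $\R$) correctly finishes the argument. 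Two smaller points: $\R$ is well-orderable because it has size less than $\delta$ by hypothesis, not because of $\DC_\delta$; and $V$ cannot ``choose names for the elements of $A$,'' since $V$ does not know $A$ --- one instead fixes a single $\R$-name for an enumeration of $A$ and reads off the possible-value sets definably from it.
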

\begin{proof}
The proof is essentially identical to that of Lemma 12 in \cite{hamkinsjohnstone:unfoldability}, except that one needs to exercise care wherever the full axiom of choice is used. For instance, the $\delta$-cover property is verified by observing that it holds for each step of the forcing. For the second step, we use that $\lt\delta$-closed forcing does not add new $\lt\delta$-sequences of ground model elements, which in our case relies on the preservation of $\DC_\delta$ by the first step of the forcing (Theorem~\ref{th:DCPreservation1}). Another key step of that proof uses mixing in $\R$, which we are able to do as well because $\R$ is well-ordered of size less than $\delta$ and $\AC_\delta$ holds in $V$.
\end{proof}

\noindent Thus, in particular, the pair consisting of a model $V\models\ZF+\DC_\delta$ and its forcing extension $V[G]$ by a poset admitting a gap at $\delta$ has the $\delta$-cover and $\delta$-approximation properties. Crucially, it also follows that such a pair has the $\delta^+$-cover and $\delta^+$-approximation properties. The additional cover and approximation properties will allow us to fulfill a hypothesis of the generalized uniqueness theorem that we are about to state and prove.

As in the proof of $\ZFC$ ground model definability (sketched in the introduction), we will eventually need that the uniqueness theorem holds for some fragment of $\ZF+\DC_\delta$ such that there is a proper definable class of ordinals $\alpha$ for which $V_\alpha$ is a model of this fragment. We will denote by $\Z$ the fragment of $\ZF$ consisting of Zermelo set theory (without choice) together with the axiom asserting that the universe is the union of the von Neumann hierarchy.\footnote{Interestingly, over Zermelo set theory, the axiom asserting that for every ordinal $\alpha$, $V_\alpha$ exists does not imply that the universe is the union of the von Neumann hierarchy. A counterexample model was constructed by Sam Roberts in response to a MathOverflow question \cite{roberts:vN}. Recall that the \emph{Zermelo} ordinals are defined by $Z(\emptyset)=\emptyset$, $Z(\alpha+1)=\{Z(\alpha)\}$, and $Z(\lambda)=\{Z(\alpha)\mid\alpha<\lambda\}$. Now consider the model $M$ obtained by starting with $V_{\omega+\omega}$, adding $Z(\omega+\omega)$ and closing under pairing, union, subsets, and powersets in $\omega$-many steps. It is not difficult to see that $M$ is a model of Zermelo set theory of height $\omega+\omega$. Thus, $M$ satisfies that $V_\alpha$ exists for every ordinal $\alpha$, but it is not a model of $\Z$.} Observe that if $V\models \ZF+\DC_\delta$, then for every limit ordinal $\lambda$, $V_\lambda\models \Z$, and if moreover $\lambda>\delta$, then $V_\lambda\models\DC_\delta$ as well. Previous versions of the uniqueness theorem have used different fragments of $\ZFC$. In his proof of ground model definability, Woodin argued that the uniqueness theorem holds for models of the theory ${\rm ZC}^{(\text{VN})}+\Sigma_1\text{-replacement}$, where ${\rm ZC}^{(\text{VN})}$ consists of Zermelo set theory with choice and the additional axiom that $V_\alpha$ exists for every ordinal $\alpha$, and $\Sigma_1\text{-replacement}$ is the replacement axiom for $\Sigma_1$-definable functions \cite{woodin:groundmodel}. Reitz, in his ground axiom paper \cite{reitz:groundaxiom}, argued that the uniqueness theorem holds for models of the theory $\ZFC_\delta$ (for a regular cardinal $\delta$), consisting of Zermelo set theory with choice, replacement for definable functions with domain $\lesseq\delta$, and the additional axiom asserting that every set is coded by a set of ordinals\footnote{In~\cite{reitz:groundaxiom}, this coding axiom is given formally as $\forall A\exists\alpha\in\ORD\exists E\subseteq\alpha\times\alpha\, \la \alpha,E\ra\cong \la \text{tc}(\{A\}),\in\ra$, but this formalization appears to be slightly too weak. In order to truly code every set by a set of ordinals, Reitz's arguments use the ability to \emph{decode} sets of ordinals into transitive sets, but without the replacement axiom, this may not be possible. It thus appears that Reitz's $\ZFC_\delta$ should include an additional requirement, namely that for all ordinals $\alpha$ and every well-founded extensional relation $E \subseteq \alpha \times \alpha$, the Mostowski collapsing map of $\la \alpha,E\ra$ exists.} below, replacement for functions with domain $\lesseq\delta$ turns out to superfluous because the ranges of the functions in question are contained in a set and therefore $\DC_\delta$ (which follows from choice) suffices to argue that their ranges are themselves sets.\footnote{Reitz's theory would still need replacement for functions on $\omega$ because the proof uses the existence of transitive closures, which does not follow from Zermelo set theory.} Over Zermelo set theory, the $\Sigma_1$-replacement axiom implies Reitz's coding axiom and strengthens the assertion that $V_\alpha $ exists for every $\alpha$ to our assertion that the universe is the union of the von Neumann hierarchy. Because our proof combines a weak version of coding which already follows from $Z^*+\DC_\delta$ with an induction that relies on the fact that the universe is the union of the von Neumann hierarchy, we avoid the need for any additional replacement or coding axioms.
\begin{theorem}\label{th:coverandapproxzf}
Suppose that $V$, $V'$, and $W$ are transitive models of $\Z+\DC_\delta$, for some regular cardinal $\delta$ of $W$. Suppose that the pairs $V\of W$ and $V'\of W$ have the $\delta$-cover and $\delta$-approximation properties, $P(\delta)^V=P(\delta)^{V'}$, and $\her{\delta}^W\cap V=\her{\delta}^V$. Then $V=V'$.
\end{theorem}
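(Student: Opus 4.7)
Plan: I adapt Hamkins' argument for Theorem~\ref{th:coverandapprox} to the $\Z+\DC_\delta$ setting, exercising care at each place where the full axiom of choice is ordinarily used. By symmetry it suffices to show $V\subseteq V'$. The strategy has two stages: a \emph{weak coding lemma} handling hereditarily-small sets, and a transfinite induction along the cumulative hierarchy that bootstraps from there via cover and approximation. Because each of $V,V',W$ models $\Z$, every set of the model is in some $V_\alpha$ of the model, so it suffices to show $V_\alpha^V\subseteq V'$ for all $\alpha$.

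Weak coding lemma: In any transitive $M\models\Z+\DC_\delta$, each $x\in\her{\delta}^M$ admits a code $E_f\in P(\delta)^M$ of the form $E_f=\{\langle\alpha,\beta\rangle:f(\alpha)\in f(\beta)\}$, where $f:\delta\to\text{tc}(\{x\})\cup\{x\}$ is a surjection produced by $\DC_\delta$ applied to a tree of partial enumerations, together with a distinguished $\alpha_0$ with $f(\alpha_0)=x$. Decoding is the Mostowski collapse of $(\delta,E_f)$: $\DC_\delta$ certifies well-foundedness (as the absence of an infinite descending chain), and the $\Z$-axiom that the universe is the union of the von Neumann hierarchy together with $\DC_\delta$ realizes the collapse as a set, by $E_f$-induction bounding the collapse image inside some $V_\beta$ and then invoking separation. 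Absoluteness of the Mostowski collapse across transitive models of $\Z+\DC_\delta$, combined with $P(\delta)^V=P(\delta)^{V'}$, yields $\her{\delta}^V\subseteq V'$; the hypothesis $\her{\delta}^W\cap V=\her{\delta}^V$ then anchors the base case.

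Main induction: Assuming $V_\beta^V=V_\beta^{V'}$ for all $\beta<\alpha$, at a successor $\alpha=\beta+1$ and $x\in V_{\beta+1}^V$, one has $x\subseteq V_\beta^V=V_\beta^{V'}\subseteq V'$. Apply the $\delta$-approximation property for the pair $V'\subseteq W$: it suffices to show $x\cap a\in V'$ for every $a\in V'$ with $|a|^{V'}<\delta$. The $\delta$-cover property for $V\subseteq W$ produces $B\in V$ with $a\subseteq B$ and $|B|^V<\delta$, and then $x\cap B\in V$ is a $V$-small subset of $V_\beta^V$. The crux is to show $x\cap B\in V'$: one verifies that $x\cap B$ has hereditarily small size in $W$, transfers this via $\her{\delta}^W\cap V=\her{\delta}^V$ to conclude $x\cap B\in\her{\delta}^V$, and then applies the weak coding lemma to place $x\cap B\in V'$. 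Finally $x\cap a=(x\cap B)\cap a\in V'$. Limit stages are immediate from taking unions.

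The principal obstacle is this transfer in the successor step: showing that a $V$-small slice $x\cap B$ of an arbitrary level $V_\beta^V$ is an element of $\her{\delta}^V$, where the weak coding lemma can be invoked. The cardinality of $x\cap B$ is controlled by cover, but its hereditary size depends on the hereditary sizes of its members inside $V_\beta^V$, which are not automatically bounded. The hypothesis $\her{\delta}^W\cap V=\her{\delta}^V$ is precisely what allows us to run the verification of hereditary smallness inside $W$ — where the inductive hypothesis has already made $V,V'$ agree below $\beta$, and where cover and approximation furnish small $W$-witnesses — and then transport the conclusion down into $V$. Calibrating this transfer, perhaps via a nested descent on the ranks of the members of $x\cap B$ using further applications of cover and approximation, is the main technical effort of the proof.
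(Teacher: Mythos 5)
Your overall architecture matches the paper's: a coding observation for hereditarily small sets via Mostowski collapse and $P(\delta)^V=P(\delta)^{V'}$, followed by an induction on the von Neumann hierarchy (legitimate since $\Z$ asserts the universe is the union of that hierarchy), with the successor step reduced via the $\delta$-approximation property to transferring $V$-small subsets of the common transitive set $T=V_\beta^V=V_\beta^{V'}$ into $V'$. But the route you propose for that crucial transfer has a genuine gap, and it is one you cannot repair by ``calibrating'' the hypothesis $\her{\delta}^W\intersect V=\her{\delta}^V$. A set $x\intersect B$ of size less than $\delta$ whose elements lie in $V_\beta^V$ is in general \emph{not} hereditarily small in $V$, in $W$, or anywhere: its members can be sets of enormous transitive closure (say, copies of $V_\xi$ for large $\xi$), and no cover/approximation argument will shrink their transitive closures. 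The hypothesis $\her{\delta}^W\intersect V=\her{\delta}^V$ only transfers hereditary smallness that already holds in $W$ down to $V$; it cannot create it. So the weak coding lemma is simply not applicable to $x\intersect B$, and the ``nested descent on ranks'' you gesture at is not a proof.

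The idea you are missing is that one should collapse not the set $x\intersect B$ but an \emph{extensional} cover of it inside $T$, and that this cover can be arranged to lie in both $V$ and $V'$. Concretely (this is the paper's Claim 1): given $A\of T$ of size less than $\delta$, use the $\delta$-cover property and the transitivity of $T$, together with $\DC_\delta$, to extend a cover in $V$ to an $\in$-extensional cover $B_0\of T$ of size less than $\delta$ in $V$; then cover $B_0$ by an extensional $B_1\in V'$, and alternate for $\delta$ steps (working in $W$, using $\DC_\delta$ and the regularity of $\delta$ at limits, noting $W$ can consult $P(T)^V$ and $P(T)^{V'}$ to make these choices). The union $B=\Union_{\xi<\delta}B_\xi$ lands in $V\intersect V'$ by the $\delta$-approximation property, since every small set is caught by some stage. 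Because $B$ is extensional, its Mostowski collapse $\pi:B\to b$ exists in both models and $b$ is a \emph{transitive} set of size at most $\delta$ --- here, and only here, does hereditary smallness appear, and the hypothesis $\her{\delta}^W\intersect V=\her{\delta}^V$ is used to see $|b|^V\leq\delta$. A bijection $g:b\to\delta'\leq\delta$ in $V$ is coded by a subset of $\delta$, hence lies in $V'$ as well, giving a common bijection $f=g\circ\pi:B\to\delta'$ in $V\intersect V'$. Now the transfer is immediate: for $A\in V$ with $A\of B$, the image $f\image A\of\delta$ lies in $P(\delta)^V=P(\delta)^{V'}$, so $A\in V'$. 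This replaces your unfounded appeal to $x\intersect B\in\her{\delta}^V$; the rest of your induction then goes through as you describe.
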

\begin{proof} We follow the main ideas of the proof for the $\rm ZFC$ context as presented in~\cite{laver:groundmodel} (Theorem 1, Lemmas 1.1 and 1.2) closely, but make a few significant changes to adapt the arguments to the $\rm ZF$ case. The proof of \cite{laver:groundmodel} proceeds, for instance, by arguing that $V$ and $V'$ have the same sets of ordinals, a condition which suffices to conclude that $V=V'$ only if both are models of $\ZFC$. Our changes to that argument are designed to overcome this and other uses of full choice.

First, we make a general observation about the Mostowski collapse in models of $\Z$ that will be used throughout the proof. Suppose that $E$ is a well-founded extensional relation on a set $A$. Even though replacement may fail to hold, because $E$ is a relation on a set, we can define the $E$-rank function $e:\ORD\to P(A)$. In particular, if the $E$-ranks are bounded, that is, there is an $\alpha$ such that $e(\alpha)=e(\alpha+1)$, then the Mostowski collapse maps into $V_\alpha$, and therefore exists as a set. Thus, in models of $\Z$, any  extensional well-founded set relation with bounded ranks is isomorphic to the $\in$-relation on a transitive set. For instance, it follows from this that $\her{\delta}^V=\her{\delta}^{V'}$. To see this, suppose that $A\in V$ is a transitive set having a bijection to some ordinal $\delta'\leq \delta$. This bijection imposes a relation $E$ on $\delta'$ corresponding to the $\in$-relation on $A$. Since $P(\delta)^{V}=P(\delta)^{V'}$, we have that $E\in V'$. Clearly, since $E$ codes $A$, the $E$-ranks are bounded and hence $V'$ can Mostowski collapse $E$ to recover $A$.

Note that, using the $\delta$-cover and $\delta$-approximation properties, it follows that $V$, $V'$, and $W$ all have the same ordinals. Next, we observe that a set $A\in V$ has size less than $\delta$ in $V$ if and only if it has size less than $\delta$ in $W$. Clearly, since $V\subseteq W$, then $|A|^W\leq |A|^V$. For the other direction, it suffices to observe that, since $V$ satisfies $\DC_\delta$, it follows that either a set there has size less than $\delta$ or there is an injection of $\delta$ into it. The same statement obviously holds for $V'$ and $W$ as well, and thus when dealing with sets of size less than $\delta$, it does not matter in which of the three models that size is computed. Using the hypothesis that $\her{\delta}^W\cap V=\her{\delta}^V$, we can extend this conclusion to transitive sets of size $\delta$. This additional assumption replaces an analogous hypothesis of Theorem~\ref{th:coverandapprox} that $(\delta^+)^V=(\delta^+)^W$, which appears insufficient without full choice, since a set in $V$ of size $\delta$ in $W$ may not even be well-orderable in $V$.

\emph{Claim 1:} Suppose that $T\in V\intersect V'$ is a transitive set and $A\in W$ is any subset of $T$ of size less than $\delta$. Then there exist a common cover $B\in V\intersect V'$ with $A\of B$ and a common bijection $f:B\to \delta'$ with $f\in V\intersect V'$ for some $\delta'\leq\delta$.

\emph{Proof of Claim 1:} Fix $A\in W$ with $A\of T$ and $|A|<\delta$. Using the $\delta$-cover property of the pair $V\subseteq W$, there is a cover $B_0\in V$ such that $A\subseteq B_0\subseteq T$ and $|B_0|<\delta$. Now we make the key observation that, since $T$ is transitive, we can, working in $V$ and using $\DC_\delta$, extend $B_0$ to an $\in$-extensional cover of size less than $\delta$. Thus, we may assume without loss of generality that $B_0$ is already extensional. The set $B_0$ may in turn be covered by an extensional $B_1\in V'$ with $B_1\subseteq T$ and $|B_1|<\delta$, this time using the $\delta$-cover property of the pair $V'\subseteq W$. Now we observe that $W$ can tell which subsets of $T$ are elements of $V$ or $V'$ by consulting $P(T)^V$ and $P(T)^{V'}$, both of which are elements of $W$. Thus, working in $W$ and using $\DC_\delta$ together with the regularity of $\delta$ to get through limit stages, we may obtain a sequence
\begin{displaymath}
A\subseteq B_0\subseteq B_1\subseteq \cdots\subseteq B_\xi\subseteq\cdots\text{ for }\xi<\delta
\end{displaymath}
with cofinally many $B_\xi\in V$ and cofinally many in $V'$ such that each $B_\xi\subseteq T$ is extensional and $|B_\xi|<\delta$. Let $B=\Union_{\xi<\delta}B_\xi$ and note that it has size at most $\delta$ in $W$ by $\DC_\delta$. Since $\delta$ is regular, it follows that if $a\in W$ is any set of size less than $\delta$, then $B\intersect a=B_\xi\intersect a$ for a sufficiently large $\xi$, and so $B\in V\cap V'$ by the $\delta$-approximation property. Thus, it remains to demonstrate the existence of the required bijection $f$. Since $\in$ is clearly extensional on $B$, we let $\pi:\<B,\in>\to \<b,\in>$ be the Mostowski collapse of $\<B,\in>$. Note that $\pi$ exists both in $V$ and in $V'$, by the uniqueness of the collapsing map, and so $b$ is a transitive set in $V\intersect V'$ with $|b|^W\leq\delta$. By our earlier remarks, it follows that $|b|^V\leq\delta$ as well, and  thus, we let $g\in V$ be a bijection from $b$ onto $\delta'$ for some $\delta'\leq\delta$. Since the bijection $g$ can be coded by a subset of $\delta'$, and $P^V(\delta)=P^{V'}(\delta)$ by assumption, it follows that $g\in V'$ also. Let $f=g\circ\pi$ be the composition map. Then $f\in V\intersect V'$, and $f:B\to \delta'$ is the desired a bijection that exists in $V\intersect V'$.~$\square$

\emph{Claim 2:} Suppose that $T\in V\intersect V'$ is a transitive set and $A\in W$ is any subset of $T$ of size less than $\delta$. Then $A\in V$ if and only if $A\in V'$.

\emph{Proof of Claim 2:} We assume that $A\in V$. By Claim~1, there is a cover $B\in V\intersect V'$ with $A\of B$ and a bijection $f\in V\intersect V'$ with $f:B\to \delta'$ for some $\delta'\leq\delta$. Since $V$ and $V'$ have the same subsets of $\delta$ by assumption and $f\image A\of \delta'\of \delta$, it follows that $f\image A \in V'$ and hence $A\in V'$.~$\square$

Remarkably, the $\delta$-approximation property now allows us to strengthen Claim 2 to apply to all $A\of T$, whether well-orderable or not.

\emph{Claim 3.} Suppose that $T\in V\intersect V'$ is a transitive set and $A\in W$ is any subset of $T$. Then  $A\in V$ if and only if $A\in V'$.

\emph{Proof of Claim 3.}  We assume that $A\in V$. Since $T\in V\intersect V'$, it follows that $A\of V'$, and so we may apply the $\delta$-approximation property of the pair $V'\of W$ to argue that $A\in V'$. Toward this end, we fix some $a\subseteq T$ in $V'$ of size less than $\delta$, and proceed to show that $a\intersect A\in V'$.  By Claim~2, it follows that $a$ is an element of $V$. Thus, $a\cap A$ is an element of $V$ of size less than $\delta$. Using Claim~2 once again, we have that $a\intersect A$ is an element of $V'$, which completes the argument that $A\in V'$.~$\square$

Finally, to prove the theorem, we first argue by induction that $V_\alpha^V=V_\alpha^{V'}$ for all ordinals $\alpha$. The limit step is trivial, and for the successor step, we assume inductively that $V_\xi^V=V_\xi^{V'}$ and apply Claim~3 to conclude that $V_{\xi+1}^V=P(V_\xi^V)=P(V_\xi^{V'})=V_{\xi+1}^{V'}$. It follows that $V=V'$ since the $\Z$-axioms include the assertion that the universe the union of the von Neumann hierarchy.
\end{proof}
As we already noted in the proof of Theorem~\ref{th:coverandapproxzf}, its hypothesis that $\her{\delta}^W\cap V=\her{\delta}^V$ replaces the analogous hypothesis of Theorem~\ref{th:coverandapprox} that $(\delta^+)^V=(\delta^+)^W$, which might be weaker in the choiceless context. However, in the presence of slightly more choice, namely $\DC_{\delta^+}$ rather than $\DC_\delta$, we claim that $(\delta^+)^V=(\delta^+)^W$ implies $\her{\delta}^W\cap V=\her{\delta}^V$. If $V\models \Z+\DC_{\delta^+}$ and $A\in V$, then either $A$ has size $\lesseq\delta$ in $V$, or there is an injection from $(\delta^+)^V$ into $A$. Thus, if $V\subseteq W$  with $(\delta^+)^V=(\delta^+)^W$ and  $A\in\her{\delta}^W\cap V$, then such an injection cannot exist and so $A$ has hereditary size at most $\delta$ in $V$, as desired.

The proof of Main~Theorem~1 now follows by combining Theorem~\ref{th:coverandapproxzf} together with Theorem~\ref{th:dcpreservation} and the proof of Theorem~\ref{th:zfcgroundmodeldefinable} from \cite{laver:groundmodel} sketched in the introduction.
\begin{main1}
Suppose $V$ is a model of $\ZF+\DC_\delta$, $\p\in V$ is a forcing notion admitting a gap at $\delta$, and $G\subseteq\p$ is $V$-generic. Then in $V[G]$, the ground model $V$ is definable from the parameter $P(\delta)^V$.
\end{main1}
\begin{proof}
First, observe that we can assume without loss of generality that $\delta$ is regular. If $\delta$ was singular, then we could replace it by the regular cardinal $\delta'=\gamma^+$, where $\p$ factors as $\R*\dot\q$ with $|\R|=\gamma<\delta$, witnessing that it admits a gap at $\delta$.

By Theorem~\ref{th:forcingcoverandapproxzf}, the pair $V\subseteq V[G]$ has the $\delta$-cover and $\delta$-approximation properties. Moreover, as we observed earlier the pair $V\subseteq V[G]$ has the $\delta^+$-cover and $\delta^+$-approximation properties. It follows that any set in $V$ that has size $\lesseq\delta$ in $V[G]$ also has size $\lesseq\delta$ in $V$, and so in particular we have $\her{\delta}^{V[G]}\cap V=\her{\delta}^V$. Finally, by Theorem~\ref{th:dcpreservation}, the forcing extension $V[G]\models\DC_\delta$.

It is easy to see that the $\delta$-cover and $\delta$-approximation properties reflect down to pairs $V_\lambda\subseteq V_\lambda[G]$ for $\lambda$ of cofinality  $\geq\delta$. Moreover, any such $V_\lambda\models\Z+\DC_\delta$.
Thus, the sets $V_\lambda$, for ordinals $\lambda>\delta^+$ of cofinality $\geq\delta$ in $V[G]$, are now defined in $V[G]$ as the unique transitive models $M\models \Z+\DC_\delta$ of height $\lambda$, having $P(\delta)^M=P(\delta)^V$ such that the pair $M\subseteq V[G]_\lambda$ has the $\delta$-cover and $\delta$-approximation properties.
\end{proof}

\section{Undefinable $\ZFC^-$ ground models}\label{sec:zfcminus}
In this section, we show that it is consistent for Laver's and Woodin's ground model definability result to fail for the canonical $\ZFC^-$ models $\her{\kappa}$. Starting with a universe $V$ and a cardinal $\kappa\in V$, we produce a forcing extension $V[G]$ in which $\her{\kappa}^{V[G]}$ is not definable in its forcing extension by a poset of the form $\Add(\delta,1)$. The same forcing construction carried out over the Mostowski collapse of a countable elementary submodel of a sufficiently large $\her{\theta}$ shows the existence of countable transitive models of $\ZFC^-$ that violate ground model definability. All our counterexample models violate ground model definability  in a strong sense, because they have set-forcing extensions in which they are not definable, not even when using parameters from the extension.


\begin{main2}
Assume that $\delta, \kappa$ are cardinals such that $\delta$ is regular and either $2^{\lt\delta}\!<\!\kappa$\, or \,$\delta\!=\!\kappa\!=\!2^{\lt\kappa}$ holds. If $V[G]$ is a forcing extension by $\Add(\delta,\kappa^+)$,
then $\her{\kappa}^{V[G]}$ is not definable in its forcing extension by $\Add(\delta,1)$. It follows that there exists a countable transitive model of $\ZFC^-$ that is not definable in its Cohen forcing extension.
\end{main2}
\begin{proof} For ease of presentation we shall only prove the specific case of the theorem when $\delta=\omega$ and $\kappa=\omega_1$, so that $V[G]$ is the forcing extension by $\Add(\omega,\omega_2)$. We shall say a few words about the proof of the general case at the end of this proof.

Fix any $V[G]$-generic $g\subseteq \Add(\omega,1)$. We will show that $H_{\omega_2}^{V[G]}$ is not definable in its $\Add(\omega,1)$-forcing extension $H_{\omega_2}^{V[G]}[g]$, not even when using parameters from the forcing extension $H_{\omega_2}^{V[G]}[g]$. In $V[G]$, every nice $\Add(\omega,1)$-name for a subset of $\omega_1$ has hereditary size at most $\omega_1$, and since every element of $H_{\omega_2}^{V[G]}$ can be coded by a subset of $\omega_1$ via the Mostowski collapse, it follows that $H_{\omega_2}^{V[G][g]}= H_{\omega_2}^{V[G]}[g]$. Using this equality, we now suppose towards a contradiction that $H_{\omega_2}^{V[G]}$ is definable in $H_{\omega_2}^{V[G][g]}$ by some formula $\varphi(x,a)$ for some parameter $a\in H_{\omega_2}^{V[G][g]}$. Without loss of generality, assume that $a\subseteq\omega_1$. Let $\dot a$ be a nice $\Add(\omega,\omega_2)\times \Add(\omega,1)$-name for a subset of $\omega_1$ such that $(\dot a)_{G\times g}=a$, and let $\dot G$ be the canonical $\Add(\omega,\omega_2)\times \Add(\omega,1)$-name for a generic filter on the $\Add(\omega,\omega_2)$ part of the product. Let $g_\alpha$ denote the Cohen subset on coordinate $\alpha$ of $G$. Since each $g_\alpha$ is an element of $H_{\omega_2}^{V[G]}$, we have that $H_{\omega_2}^{V[G][g]}\models \varphi(g_\alpha,a)$ for all $\alpha<\omega_2$. Thus, we proceed to fix a condition $(p,q)\in G\times g$ forcing that for all $\alpha<\omega_2$, $H_{\omega_2}\models\varphi(x,\dot a)$ of the Cohen subset $x$ on coordinate $\alpha$ of $\dot G$. Since $\Add(\omega,\omega_2)$ has the ccc, there is $\beta<\omega_2$ such that $(\dot a)_{G\times g}=(\dot a)_{G_\beta\times g}$, where $G_\beta$ is the restriction of $G$ to the first $\beta$-many coordinates of $\Add(\omega,\omega_2)$. We may choose $\beta$ large enough so that $p\in G_\beta$.

Fix any ordinal $\gamma$ in $\omega_2$ above $\beta$. A standard approach to take towards obtaining a contradiction would be to try to interchange $g$ with $g_\gamma$. This doesn't quite work because $a$ is an element of the extension $H_{\omega_2}^{V[G][g]}$ and therefore $g$ may be necessary to interpret $\dot a$ correctly. Instead, our strategy will be to use an automorphism $\pi$ of $\Add(\omega,1)$ in $V[g]$ whose point-wise image of $g_\gamma$ produces a filter $\overline g_\gamma$ that together with $g_\gamma$ codes in $g$. For instance, we can take $\pi$ to be the function mapping a sequence $s\in \Add(\omega,1)$ to the sequence of the same length with the value of a bit of $s$ flipped whenever the value of that bit in $g$ is $1$. Viewing $\pi$ as an automorphism of $\Add(\omega,\omega_2)$ which acts only on coordinate $\gamma$, we obtain, by applying it to $G$, the filter $\overline G$, where $g_\gamma$ is replaced by $\overline g_\gamma$ on coordinate $\gamma$. Since $\pi$ is an automorphism of $\Add(\omega,\omega_2)$ in $V[g]$, the filter $\overline G$ is $V[g]$-generic and $V[g][G]=V[g][\overline G]$. Because we are using product forcing, it follows that $V[G][g]=V[\overline G][g]$. The condition $(p,q)$ is an element of $\overline G\times g$, since $G_\beta$ is an initial segment of $\overline G$, and it forces that $H_{\omega_2}\models\varphi(x,\dot a)$ of the Cohen subset $x$ on coordinate $\gamma$ of $\dot G$. Since $(\dot G)_{\overline G\times g}=\overline G$ and $(\dot a)_{\overline G\times g}=a$, it follows that $H_{\omega_2}^{V[\overline G][g]}=H_{\omega_2}^{V[G][g]}\models\varphi(\overline g_\gamma,a)$, and hence $\overline g_\gamma$ is an element of $H_{\omega_2}^{V[G]}$. Now we have $g_\gamma$ and $\overline g_\gamma$ both in $H_{\omega_2}^{V[G]}$, from which it follows by the definition of $\pi$ that the filter $g$ is in $H_{\omega_2}^{V[G]}$ as well.  Thus, we have reached a contradiction, showing that $H_{\omega_2}^{V[G]}$ could not have been definable in $H_{\omega_2}^{V[G][g]}$.

To see that there exists a transitive model of $\ZFC^-$ that is not definable in its Cohen extension, we carry out the above forcing construction over the collapse of a countable elementary submodel of some sufficiently large $\her{\theta}$. For instance, suppose $\theta$ is large enough that $H_{\omega_2}\in\her{\theta}$. Let $M$ be a transitive model of $\ZFC^-$ that is the collapse of some countable elementary submodel of $\her{\theta}$. Since $M$ is countable, there is, in $V$, an $M$-generic filter for $\Add(\omega,\omega_2)^M$. Consider the countable transitive $\ZFC^-$-model $N=H_{\omega_2}^{M[G]}$ and use the above argument to show that $N$ is not definable in its Cohen forcing extension.

The proof of the general case for arbitrary cardinals $\delta, \kappa$ is essentially the same. In the case when $2^{\lt\delta}<\kappa$, the poset $\Add(\delta,\kappa^+)$ has the $\kappa$-cc and is $\lt\delta$-closed, and it thus preserves all cardinals ${\leq}\delta$ and all cardinals ${\geq}\kappa$. When $\delta=\kappa=2^{\lt\kappa}$, the poset $\Add(\delta,\kappa^+)$ has the $\kappa^\plus$-cc and is $\lt\kappa$-closed, and it thus preserves all cardinals. In both cases, $\kappa$ is preserved as a cardinal in $V[G]$, and so it makes sense to consider $\her{\kappa}^{V[G]}$. The $\kappa^\plus$-cc allows for the analogous nice name argument.
\end{proof}
\noindent In particular, it follows from the theorem that it is consistent to have cardinals $\kappa>\!>\omega$ such that $\her{\kappa}$ is not definable in its Cohen forcing extension.

Recall from the introduction that following Laver's proof~\cite{laver:groundmodel} a natural parameter to use when defining $V$ in a forcing extension $V[G]$ by a poset $\p$ is the parameter  $P(\delta)^V$, where $\delta=|\p|^+$ in $V$ (see also~\cite{reitz:groundaxiom}). The second author and Joel Hamkins observed in 2012 how this parameter can easily be reduced to the parameter $P(|\p|)^V$, the parameter that Woodin had used in his $\ZFC$ ground model definition~\cite{woodin:groundmodel}.  Namely, if $V\subseteq V[G]$ has the $\delta$-approximation property, then $A\subseteq\delta$ is an element of $P(\delta)^V$ if and only if for every bounded subset $a\in P_\delta(\delta)^V$ the intersection $a\cap A$ is an element of $P_\delta(\delta)^V$.\footnote{As is standard, we let $P_\alpha(\delta)$ denote the collection of all subsets of $\delta$ of size less than $\alpha$.} Thus, $P(\delta)^V$ is definable in $V[G]$ from $P_\delta(\delta)^V$.  But since every element of $P_\delta(\delta)^V$ is coded by an element of $P(|\p|)^V$, it follows that $P(\delta)^V$ is definable in $V[G]$ from $P(|\p|)^V$.
\begin{theorem}
It is consistent that a ground model $V$ cannot be defined in its Cohen forcing extension $V[g]$  with a parameter of hereditary size less than $2^\omega$.
\end{theorem}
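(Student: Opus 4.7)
The plan is to apply the product-forcing automorphism argument from the proof of Main~Theorem~2 directly to $V$, not just to $H_{\omega_2}^V$; the assumption that the parameter has hereditary size below $2^\omega$ is precisely what allows the ccc to bound the support of its name.

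First I would arrange that $2^\omega=\omega_2$ in the ground model by taking $V:=W[G]$ where $W\models\ZFC+\GCH$ and $G$ is $W$-generic for $\Add(\omega,\omega_2^W)$; then all cardinals of $W$ are preserved and $V\models 2^\omega=\omega_2$. Having fixed a $V$-generic $g\subseteq\Add(\omega,1)$, I would suppose toward a contradiction that $\varphi(x,a)$ defines $V$ in $V[g]$ for some parameter $a\in V[g]$ with $|\text{tc}(\{a\})|^{V[g]}<2^\omega$. Using choice in $V[g]$, I would Mostowski-collapse $\text{tc}(\{a\})$ and encode the result to assume without loss of generality that $a\subseteq\omega_1$, then fix a nice $\Add(\omega,\omega_2^W)\times\Add(\omega,1)$-name $\dot a$ for $a$ over $W$. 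Since the product is ccc, its antichains are countable, so the union of the $\Add(\omega,\omega_2^W)$-supports of the conditions appearing in $\dot a$ is bounded below some $\beta<\omega_2$.

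Each Cohen subset $g_\alpha$ on coordinate $\alpha$ of $G$ lies in $V$, so $V[g]\models\varphi(g_\alpha,a)$ for every $\alpha<\omega_2$. I would then fix a condition $(p,q)\in G\times g$, with $p$ supported below $\beta$, that forces $\varphi(\dot g_\alpha,\dot a)$ uniformly in $\alpha$. For any $\gamma$ with $\beta<\gamma<\omega_2$, working in $V[g]$ I would form the automorphism $\pi$ of $\Add(\omega,\omega_2^W)$ which is the identity off coordinate $\gamma$ and on coordinate $\gamma$ flips bits at positions where $g$ is $1$. Then $\overline G:=\pi[G]$ is another $V[g]$-generic filter with $V[g][\overline G]=V[g][G]$, so $V[G][g]=V[\overline G][g]$. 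Because $\pi$ fixes $p$ and $\dot a$, the condition $(p,q)\in\overline G\times g$ still forces $\varphi(\dot g_\gamma,\dot a)$, yielding $\overline g_\gamma\in V$. But now $g_\gamma,\overline g_\gamma\in V$ and their symmetric difference is $g$, so $g\in V$, contradicting $V$-genericity.

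The main obstacle is the opening coding step: one must reduce an arbitrary parameter of hereditary size less than $2^\omega$ to a subset of $\omega_1$ and then to a nice name of bounded support via the ccc. Once that reduction is accomplished, the passage from Main~Theorem~2---which targets only $H_{\omega_2}^V$ and already assumes $a\subseteq\omega_1$---to the present statement, which targets all of $V$, is essentially automatic. The bound $2^\omega$ is sharp because Woodin's defining parameter $P(\omega)^V$ has hereditary size exactly $2^\omega$ in $V[g]$.
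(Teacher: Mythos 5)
Your proposal is correct and takes essentially the same route as the paper: the paper's proof is likewise just a rerun of the automorphism argument from Main~Theorem~2, except that it arranges $2^\omega=\omega_1$ by forcing with $\Add(\omega,\omega_1)$ over a model of ${\rm CH}$ (so that a parameter of hereditary size less than $2^\omega$ is coded by a real and its nice name has countable support), whereas you arrange $2^\omega=\omega_2$ via $\Add(\omega,\omega_2)$ over ${\rm GCH}$ and code the parameter by a subset of $\omega_1$; both setups work for the same reason. One notational slip worth fixing: with your convention $V=W[G]$, the filter $\overline G=\pi[G]$ is $W[g]$-generic (not $V[g]$-generic), and the relevant identity is $W[g][G]=W[g][\overline G]$, hence $W[G][g]=W[\overline G][g]$.
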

\begin{proof}
Suppose that ${\rm CH}$ holds in $V$ and $V[G]$ is a forcing extension by $\Add(\omega,\omega_1)$. Clearly ${\rm CH}$ continues to hold in $V[G]$. Let $V[G][g]$ be the forcing extension of $V[G]$ by $\Add(\omega,1)$. The proof of  Main~Theorem~2 easily modifies to show that $P(\omega)^{V[G]}$, and thus, $V[G]$ itself, cannot be defined in $V[G][g]$ using a parameter from $H_{\omega_1}$.
\end{proof}

The pair $H_{\omega_2}^{V[G]}$ and $H_{\omega_2}^{V[G][g]}$, from the proof of Main~Theorem~2, witnesses another violation of a standard $\ZFC$-result about ground models and their forcing extensions. A model of $\ZFC$ can never be an elementary submodel of its set-forcing extension since, using the ground model powerset of the poset as a parameter, it is expressible that the forcing extension contains a filter meeting all the ground model dense sets. But in set theory without powerset, forcing extensions can be elementary.
\begin{theorem}[Hamkins \cite{hamkins:elementaryforcingextension}]
It is consistent that there is a model $M=\her{\kappa}$ and a poset $\p\in M$ such that $M$ is an elementary submodel of its forcing extensions by $\p$.
\end{theorem}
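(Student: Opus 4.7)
The plan is to reuse the forcing setup from the proof of Main~Theorem~2: in a ground model $V\models\ZFC$, let $V[G]$ be the generic extension by $\Add(\omega,\omega_2)$, put $M=H_{\omega_2}^{V[G]}$, and take $\p=\Add(\omega,1)\in M$. For any $V[G]$-generic $g\subseteq\p$, the nice-name computation from the proof of Main~Theorem~2 gives $M[g]=H_{\omega_2}^{V[G][g]}$, so it suffices to establish $M\prec M[g]$.

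I will show directly that $M\models\psi(\bar a)$ iff $M[g]\models\psi(\bar a)$ for every formula $\psi$ and every tuple $\bar a\in M$; via the Tarski--Vaught test this suffices for elementarity. Fix a nice $\Add(\omega,\omega_2)$-name $\dot a\in V$ for $\bar a$; by ccc, $\dot a$ has countable support $A\subseteq\omega_2$. Letting $G_A=G\restriction A$, we factor $V[G]=V[G_A][G']$, where $G'$ is $V[G_A]$-generic for the residual poset $\Add(\omega,\omega_2\setminus A)$. Since $\dot a$ has support in $A$, we have $\bar a\in V[G_A]$, so $\bar a$ is a \emph{ground-model parameter} for the residual forcing over $V[G_A]$. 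Analogously, $V[G][g]=V[G_A][H]$, where $H$ is $V[G_A]$-generic for the product $\Add(\omega,\omega_2\setminus A)\times\Add(\omega,1)$.

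The key observation is that in $V[G_A]$ the two residual posets $\Add(\omega,\omega_2\setminus A)$ and $\Add(\omega,\omega_2\setminus A)\times\Add(\omega,1)$ are isomorphic, since both are Cohen-adding products indexed by sets of size $\omega_2$ in $V[G_A]$ (using $|A|=\omega$). By the weak homogeneity of Cohen product forcing and the fact that $\bar a$ is a ground-model parameter, the statement ``$H_{\omega_2}\models\psi(\check{\bar a})$'' in the resulting generic extension is decided by $\mathbb 1$ over $V[G_A]$, and the isomorphism of the two residual posets makes these $\mathbb 1$-decisions coincide. Since $M=H_{\omega_2}^{V[G_A][G']}$ and $M[g]=H_{\omega_2}^{V[G_A][H]}$, we conclude $M\models\psi(\bar a)\iff M[g]\models\psi(\bar a)$, yielding $M\prec M[g]$. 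The main subtlety to handle carefully is justifying the weak-homogeneity step for the relativized formula ``$H_{\omega_2}\models\psi$'', which is routine since $H_{\omega_2}$ is uniformly definable in any cardinal-preserving extension from the parameter $\omega_2$, and both residual forcings are ccc over $V[G_A]$ and hence preserve $\omega_2$.
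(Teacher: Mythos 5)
Your proposal is correct, and it reaches the conclusion by a genuinely different (though closely related) route from the paper's. The paper works directly over $V$: it fixes a condition $p\in G$ forcing $H_{\check\omega_2}\models\varphi(\dot a)$ and chooses an isomorphism $F:\Add(\omega,\omega_2)\times\Add(\omega,1)\to\Add(\omega,\omega_2)$ that fixes both $p$ and the name $\dot a$, so that $V[G][g]=V[\overline G]$ for a single $\Add(\omega,\omega_2)$-generic $\overline G\ni p$ interpreting $\dot a$ as $a$; no homogeneity is invoked, since the specific condition $p$ is tracked through the isomorphism. You instead factor through the intermediate model $V[G_A]$, turning $\bar a$ into a ground-model parameter for the residual forcing, and then combine two standard ingredients: weak homogeneity of Cohen products (to make the truth value of ``$H_{\omega_2}\models\psi(\check{\bar a})$'' independent of the generic, i.e.\ decided by $\one$) with the isomorphism of the two residual posets (to transfer that $\one$-decision). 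Both ingredients are genuinely needed in your version --- without homogeneity, $F[G']$ and $H$ would just be two different generics for the same poset and could disagree --- and both are available here, so the argument is sound. What your route buys is a cleaner conceptual statement (truth of formulas with parameters in $V[G_A]$ is invariant across all Cohen-product extensions of $V[G_A]$ of the same index-set cardinality); what the paper's route buys is avoiding the homogeneity lemma entirely. One small correction: a nice $\Add(\omega,\omega_2)$-name for an arbitrary element of $H_{\omega_2}^{V[G]}$ (coded as a subset of $\omega_1$) has support of size at most $\omega_1$, not countable support --- each of the $\omega_1$-many antichains is countable, so the union of supports has size $\leq\omega_1$. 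This does not damage your argument, since $|\omega_2\setminus A|=\omega_2$ still holds and the two residual posets remain isomorphic, but the claim ``$|A|=\omega$'' should read ``$|A|\leq\omega_1$''.
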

\begin{proof}
For instance, consider the models $M=H_{\omega_2}^{V[G]}$ and $M[g]=H_{\omega_2}^{V[G][g]}$ from the proof of Main~Theorem~2. We  argue that $M\prec M[g]$. Suppose $M\models \varphi(a)$ and assume that \hbox{$a\subseteq\omega_1$}. Fix an $\Add(\omega,\omega_2)$-name $\dot a$ such that $(\dot a)_G=a$ and a condition $p\in \Add(\omega,\omega_2)$ forcing that $H_{\check{\omega}_2}\models \varphi(\dot a)$. Now observe that the poset $\Add(\omega,\omega_2)\times \Add(\omega,1)$ is isomorphic to $\Add(\omega,\omega_2)$, and such an isomorphism may be chosen to fix any initial segment of the product $\Add(\omega,\omega_2)$. Thus, we let $F:\Add(\omega,\omega_2)\times \Add(\omega,1)\to \Add(\omega,\omega_2)$ be an isomorphism that fixes $p$ and the name $\dot a$. Next, we let $\overline G$ be the image of $G\times g$ under $F$ and observe that $V[G][g]=V[\overline G]$ and $p\in \overline G$. Since $p\in\overline G$ and $(\dot a)_{\overline G}=(\dot a)_G$, it follows that $H_{\omega_2}^{V[\overline G]}=H_{\omega_2}^{V[G][g]}=M[g]$ satisfies $\varphi(a)$. Since we chose an arbitrary formula $\varphi(x)$ with $a$ an arbitrary element of $M$, this concludes the proof that $M\prec M[g]$.
\end{proof}
All our models violating ground model definability share the feature that the powerset of the poset in whose forcing extension they are not definable is too large to be an element of the model. Indeed, it is by exploiting this very feature that the counterexample models are obtained. Is it possible that a model of  $\ZFC^-$ is not definable in its forcing extension by a poset whose powerset is an element of the model? Is it possible that a model $\her{\kappa}$ is not definable in its forcing extension by a poset whose powerset has size $\lesseq\kappa$?
David Asper\'{o} communicated to the authors that, by a result of Woodin, such a model exists in a universe with an $I_0$-cardinal, one of the strongest known large cardinal notions. A cardinal $\kappa<\lambda$ is an $I_0$-cardinal if it is the critical point of an elementary embedding $j:L(V_{\lambda+1})\to L(V_{\lambda+1})$. The $I_0$-cardinals were introduced by Woodin, and their existence pushes right up against the \emph{Kunen Inconsistency}, the existence of a nontrivial  elementary \hbox{embedding $j:V\to V$.}
\begin{theorem}[Woodin \cite{woodin:suitableextendermodels2}]\label{th:zfcminusundefinablesmall}
If there is an elementary embedding\break $j:L(V_{\lambda+1})\to L(V_{\lambda+1})$ with critical point $\kappa<\lambda$, then $\her{\lambda}$ is not definable in its forcing extension by any poset $\p\in V_\lambda$ adding a countable sequence of elements of $\lambda$. In particular, $\her{\lambda}$ is not definable in its Cohen extensions.
\end{theorem}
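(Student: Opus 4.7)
The plan is to mimic the symmetry argument in the proof of Main~Theorem~2, replacing the explicit bit-flipping automorphism of Cohen forcing with a symmetry extracted from the $I_0$-embedding $j$. Assume toward contradiction that $\her{\lambda}^V$ is definable in $\her{\lambda}^{V[G]}$ by some formula $\varphi(x,a)$ with parameter $a\in\her{\lambda}^{V[G]}$, where $G$ is $V$-generic for the given $\p\in V_\lambda$, which adds a countable sequence $s\colon\omega\to\lambda$. Every element of $\her{\lambda}^{V[G]}$ is coded by a subset of $\lambda$, so we may assume $a\subseteq\lambda$, and in particular $a\in V_{\lambda+1}^{V[G]}\subseteq L(V_{\lambda+1})^{V[G]}$. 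Fix a $\p$-name $\dot a\in V$ with $(\dot a)_G=a$, and a condition $p\in G$ forcing that $\varphi(x,\dot a)$ defines the ground model's $\her{\lambda}$.

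The crux is to use $j$ to produce an alternate $V$-generic filter $G'\subseteq\p$ such that $V[G]=V[G']$ but such that $G'$ gives rise to a distinct countable sequence $s'\colon\omega\to\lambda$ in the common extension. Since $j$ has critical point below $\lambda$ and $\lambda$ is the supremum of the critical sequence, $j$ restricts to an elementary self-embedding of $V_\lambda$, acts naturally on $\p$-names in $V_{\lambda+1}$, and, by the full strength of $I_0$, extends to all of $L(V_{\lambda+1})$. The extension to $L(V_{\lambda+1})$ is indispensable, since $\dot a$ and $s$ live in the forcing extension rather than in $V$; only at the $L(V_{\lambda+1})$-level can $j$ be lifted to act on $L(V_{\lambda+1})[G]=L(V_{\lambda+1})^{V[G]}$ in a way that moves the parameter $a$. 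One then produces $G'$ by applying the lifted embedding to $G$ and selecting, within $L(V_{\lambda+1})^{V[G]}$, a $V$-generic filter for $\p$ with the required symmetric properties.

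With $G'$ in hand, the contradiction is obtained as in Main~Theorem~2. Both $s$ and $s'$ lie in $\her{\lambda}^{V[G]}=\her{\lambda}^{V[G']}$, and the formula $\varphi(\cdot,a)$ must classify ground-model sets uniformly, yet the symmetry induced by $j$ forces $\varphi$ to either admit a new object into $\her{\lambda}^V$ or to disagree on the two symmetric sequences, yielding the contradiction. The corollary for Cohen forcing is then immediate, because $\Add(\omega,1)\in V_\omega\subseteq V_\lambda$ and adds a new real that codes a new countable sequence of elements of $\lambda$. The main obstacle is the careful construction of $G'$: one must track the action of $j$ on forcing names, verify that the resulting filter is genuinely $V$-generic for $\p$, and justify that the required symmetric configuration is realized inside $L(V_{\lambda+1})^{V[G]}$---this is precisely where the full $I_0$-strength of $j$ is consumed, as opposed to the weaker $I_1$, which would not suffice to move parameters lying in the forcing extension.
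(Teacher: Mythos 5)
Your proposal diverges from the paper's argument and, as written, has genuine gaps that I do not see how to close. The paper does \emph{not} run a symmetry argument at all: it first observes that since $\p\in V_\lambda$ we have $\her{\lambda}^{V[G]}=\her{\lambda}[G]$ and $V_{\lambda+1}^{V[G]}=V_{\lambda+1}[G]$, then shows that if $\her{\lambda}$ were definable in $\her{\lambda}^{V[G]}$, then $V_{\lambda+1}$ would already appear at some finite stage $L_n(V_{\lambda+1}^{V[G]})$ --- this uses the coding of $H_{\lambda^+}^{V[G]}$ by equivalence classes of subsets of $\lambda$ coding well-founded extensional relations, together with the definition of $\her{\lambda}$ to recover elements of $V_{\lambda+1}$ via Mostowski collapse --- and finally cites, as a black box, Woodin's theorem that under the $I_0$ hypothesis $V_{\lambda+1}\notin L_\lambda(V_{\lambda+1}^{V[G]})$ whenever the forcing adds a new element of ${}^\omega\lambda$. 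Your sketch contains neither the reduction to membership of $V_{\lambda+1}$ in $L(V_{\lambda+1}^{V[G]})$ nor any substitute for Woodin's theorem, which is where all the large-cardinal strength actually enters.

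The specific gaps in your symmetry approach: (i) since $\operatorname{crit}(j)=\kappa<\lambda$ and $\p\in V_\lambda$ is arbitrary, $j(\p)\neq\p$ in general, so ``applying the lifted embedding to $G$'' yields (at best) a filter on $j(\p)$, not a $V$-generic filter $G'\subseteq\p$; moreover lifting $j$ to act on $L(V_{\lambda+1})[G]$ requires producing a $j(\p)$-generic filter containing $j\image G$, which is a nontrivial lifting problem that you do not address and which typically changes the extension rather than preserving $V[G]=V[G']$. (ii) The contradiction in Main~Theorem~2 was not ``two symmetric sequences disagree under $\varphi$''; it was that the automorphism was engineered so that $g_\gamma$ and $\overline g_\gamma$ \emph{jointly code} the new generic $g$, so that $\varphi$ placing both in the ground model forces $g$ itself into the ground model. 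That argument leaned on the product structure of $\Add(\omega,\kappa^+)$, the ccc localization of the name $\dot a$ to boundedly many coordinates, and the availability of fresh mutually generic coordinates beyond that bound. An arbitrary $\p\in V_\lambda$ adding one countable sequence has none of this structure, so there is no analogous coding mechanism, and your phrase ``forces $\varphi$ to either admit a new object or disagree'' does not identify one. Without either that mechanism or Woodin's $V_{\lambda+1}\notin L_\lambda(V_{\lambda+1}^{V[G]})$ theorem, the argument does not go through.
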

\begin{proof}
Since $\p\in V_{\lambda}$, we have that $V_{\lambda+1}^{V[G]}=V_{\lambda+1}[G]$ and $\her{\lambda}^{V[G]}=\her{\lambda}[G]$. Thus, whenever $\her{\lambda}$ is definable in $\her{\lambda}^{V[G]}\in L(V_{\lambda+1}^{V[G]})$, we must have $V_{\lambda+1}\in L(V_{\lambda+1}^{V[G]})$. Indeed, in this case, $V_{\lambda+1}$ already by appears by some finite stage $L_n(V_{\lambda+1}^{V[G]})$ of the construction because $H_{\lambda^+}^{V[G]}$ is isomorphic to the structure $\mathcal H$ built out of equivalence classes of subsets of $\lambda$ coding extensional well-founded relations. The equivalence relation, as well as the membership relation, on the codes is definable from information in $V_{\lambda+1}^{V[G]}$. Using the definition of $H_{\lambda^+}$ in $\her{\lambda}^{V[G]}$, we can recover elements of $V_{\lambda+1}$ from $\mathcal H$ because their Mostowski collapses already exist in $V_{\lambda+1}^{V[G]}$. The theorem now follows directly from a result of Woodin showing that if $j:L(V_{\lambda+1})\to L(V_{\lambda+1})$ is an elementary embedding with critical point $\kappa<\lambda$, $\p\in V_\lambda$ is a poset, and $(^\omega\lambda)^V\neq (^\omega\lambda)^{V[G]}$ in a forcing extension $V[G]$ by $\p$, then $V_{\lambda+1}\notin L_\lambda(V_{\lambda+1}^{V[G]})$.
\end{proof}
\noindent It appears that nothing else is currently known about whether large cardinals are needed for the existence of such counterexample models.

The next observation is intended as a road map of which paths to avoid when attempting to construct a counterexample model $\her{\kappa}$ that is not definable in its forcing extension by some poset whose powerset has size $\lesseq\kappa$.
\begin{theorem}\label{th:sufficient}
Suppose $\p\in\her{\kappa}$ is a forcing notion of size $\gamma$ and $\kappa^\gamma=\kappa$. Then $\her{\kappa}$ is definable in its forcing extensions by $\p$ using the ground model parameter $P_{\gamma^+}(\kappa)$. For instance, we have:
\begin{itemize}
\item[(1)] If the \GCH\ holds and $\p\in\her{\kappa}$ with $|\p|<\text{\emph{cf}}(\kappa)$, then $\her{\kappa}$ is definable in all its forcing extensions by $\p$. In particular, for $\kappa$ such that $\text{cf}(\kappa)>\omega$, we then have that $\her{\kappa}$ is definable in its Cohen extensions.
\item[(2)] If $\kappa^{\lt\kappa}=\kappa$, then $\her{\kappa}$ is definable in all its forcing extensions by posets of size less than $\kappa$.
\end{itemize}
\end{theorem}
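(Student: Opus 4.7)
The plan is to adapt the parameter-reduction argument sketched in the introduction, restricting attention to bounded codes of elements of $\her{\kappa}^V$. Set $\delta=\gamma^+$ and $s=P_{\gamma^+}(\kappa)^V$. Since $|\p|=\gamma<\delta$, Theorem~\ref{th:gapcoverapprox} (applied with trivial tail) yields that the pair $V\of V[G]$ has the $\delta$-cover and $\delta$-approximation properties. Observe that $|s|^V=\kappa^\gamma=\kappa$ and that each element of $s$ is a subset of $\kappa$ of size at most $\gamma$, so $s\in\her{\kappa}^V\of\her{\kappa}^{V[G]}$, making $s$ available as a parameter in the forcing extension.

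The crux is that $s$ is by definition exactly the collection of $V$-subsets of $\kappa$ of size $\leq\gamma$; combined with the $\delta$-approximation property this yields a clean $V$-detection criterion for subsets of $\kappa$:
\[
A\in V \iff \forall a\in s\,(A\cap a\in s).
\]
For $(\Rightarrow)$, if $A\in V$ and $a\in s$, then $A\cap a\of\kappa$, $A\cap a\in V$, and $|A\cap a|\leq\gamma$, so $A\cap a\in s$. For $(\Leftarrow)$, the $\delta$-approximation property requires $A\cap a'\in V$ for every $a'\in V$ of size $\leq\gamma$; given such $a'$, set $a=a'\cap\kappa\in s$ and note that $A\cap a'=A\cap a\in s\of V$ since $A\of\kappa$.

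To pass from $V$-subsets of $\kappa$ to all of $\her{\kappa}^V$, I use the standard coding of $\her{\kappa}$-elements by subsets of $\kappa$. Any $x\in\her{\kappa}^V$ admits, by choice in $V$, an enumeration of $\text{tc}(\{x\})$ of order type some $\eta\leq\kappa$; pulling $\in$ back along this enumeration and composing with a canonical pairing $\eta\times\eta\hookrightarrow\kappa$ yields a set $A\subseteq\kappa$ in $V$ coding a well-founded extensional relation whose Mostowski collapse contains $x$. Conversely, if $A\in V[G]$ codes such a relation and satisfies $\forall a\in s\,(A\cap a\in s)$, then $A\in V$ by the criterion above, its Mostowski collapse lies in $V$, and hence $x\in V$. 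The final wrinkle, that $x\in V\cap\her{\kappa}^{V[G]}$ forces $x\in\her{\kappa}^V$, follows because $|\p|=\gamma<\kappa$ gives $\p$ the $\gamma^+$-c.c., which preserves every $V$-cardinal $\geq\kappa^+$, so a $V$-cardinality exceeding $\kappa$ for $\text{tc}(\{x\})$ would persist in $V[G]$, contradicting $x\in\her{\kappa}^{V[G]}$. Thus, in $V[G]$, one defines $x\in\her{\kappa}^V$ to mean $x\in\her{\kappa}^{V[G]}$ together with the existence of such a coding $A$, using only the parameter $s=P_{\gamma^+}(\kappa)^V$. The two corollaries reduce to cardinal arithmetic: under $\GCH$, $\kappa^\gamma=\kappa$ whenever $\gamma<\text{cf}(\kappa)$, giving (1); and $\kappa^{\lt\kappa}=\kappa$ trivially gives $\kappa^\gamma=\kappa$ for every $\gamma<\kappa$, giving (2). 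The main delicate point is aligning the three ingredients---the $\gamma^+$-approximation property, the parameter $P_{\gamma^+}(\kappa)^V$, and the coding of $\her{\kappa}^V$-elements as bounded subsets of $\kappa$---so that the simple formula above captures $\her{\kappa}^V$ directly; once aligned, no appeal to a uniqueness theorem is needed.
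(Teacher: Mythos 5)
Your proposal is correct and follows essentially the same route as the paper: the $\gamma^+$-approximation property for the extension, the observation that the parameter $P_{\gamma^+}(\kappa)$ has size $\kappa^\gamma=\kappa$ and hence lies in $\her{\kappa}$, the criterion that $A\subseteq\kappa$ belongs to the ground model if and only if $A\cap a$ lies in the parameter for every $a$ in the parameter, and the coding of elements of $\her{\kappa}$ by subsets of $\kappa$. The only cosmetic difference is that the paper runs the approximation argument directly for the pair $\her{\kappa}\subseteq\her{\kappa}[G]$ (noting that the usual $\ZFC$ argument generalizes to $\ZFC^-$ models whose largest cardinal exceeds $|\p|$), which sidesteps your extra steps of identifying $V\cap \her{\kappa}^{V[G]}$ with $\her{\kappa}^{V}$ via the $\gamma^+$-c.c.\ and of relocating the definition from $V[G]$ into the $\ZFC^-$ extension itself.
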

\begin{proof}
First, observe that if $\p$ is a poset of size $\gamma$ in a model $M\models\ZFC^-$ and $\gamma$ is not the largest cardinal of $M$, then the usual $\ZFC$ argument generalizes easily to show that the pair $M\subseteq M[G]$ has the $\delta$-cover and $\delta$-approximation properties for $\delta=\gamma^+$. Now let $G\subseteq\p$ be $\her{\kappa}$-generic for a poset as in the statement of the theorem. To see that $\her{\kappa}$ is definable in $\her{\kappa}[G]$, it suffices to definably select those subsets of $\kappa$ that belong to $\her{\kappa}$. Thus, consider in $\her{\kappa}$ the definable class $b=P_{\gamma^+}(\kappa)$. Since $b$ has size $\kappa$ by assumption, $b$ exists as a set in $\her{\kappa}$. Since $\her{\kappa}\subseteq\her{\kappa}[G]$ has the $\delta$-approximation property, it follows that in $\her{\kappa}[G]$ a set $A\subseteq\kappa$ is in $\her{\kappa}$ if and only if for every $a\in b$ the set $A\cap a$ is an element of $b$. Assertions~(1)~and~(2) are immediate consequences.
\end{proof}
\noindent Note that even if $P_{\gamma^+}(\kappa)$ is too large to be an element of $\her{\kappa}$, it would suffice for the proof if it was definable in the forcing extension $\her{\kappa}[G]$. Thus, for example, it is relatively consistent that $\her{\aleph_\omega}$ is definable in its Cohen extensions, even if $2^{\omega_1}>\!>\aleph_\omega$, by starting in $L$ and blowing up the powerset of $\omega_1$. Note also that since any $\kappa$ with $\kappa^{\lt\kappa}=\kappa$ retains this property after forcing with $\Add(\kappa,\kappa^+)$, it is relatively consistent by Main~Theorem~2 that $\her{\kappa}$, for such a cardinal $\kappa$, is not definable in its forcing extension by $\Add(\kappa,1)$. Thus, the size requirement on the posets in assertion~(2) of Theorem~\ref{th:sufficient} is optimal.

The proof of Theorem~\ref{th:sufficient} applies to arbitrary transitive $\ZFC^-$-models with a  largest cardinal also. Indeed, if $M\models\ZFC^-$ with the largest cardinal $\kappa\in M$, and $\p\in M$ is a forcing notion of size $\gamma$ with $\kappa^\gamma=\kappa$ in $M$, then $M$ is definable in its forcing extensions by $\p$ using the parameter $P_{\gamma^+}(\kappa)^M$. If in addition $\kappa^{\lt\kappa}=\kappa$ holds in $M$, then $M$ is definable in all its forcing extensions by posets of size less than $\kappa$, using the same parameter $P_{\gamma^+}(\kappa)^M$. However, $M$ need not be definable in its forcing extensions by posets of size $\kappa$. Several large cardinal notions $\kappa$ below a measurable cardinal are characterized by the existence of elementary embeddings of such $\ZFC^-$-models with $\kappa$ as the largest cardinal and the critical point of the embedding. Consistency results concerning these large cardinals are obtained by forcing over such models and so it should be noted that the observations above completely characterize the ground definability situation for such models.

\section{Questions}\label{sec:questions}
There remain several open questions surrounding the topics of this paper. In Main~Theorem~1, we generalized ground model definability to models of $\ZF+\DC_\delta$ and posets admitting a gap at $\delta$. We conjecture that in general a model of $\ZF$ need not be definable in its set-forcing extension.
\begin{question}
Is every model of $\ZF$ a definable class of its set-forcing extensions?
\end{question}
\noindent More specifically, since it follows from Main~Theorem~1 that every model of\break $\ZF+\DC_{\omega_1}$ is definable its Cohen forcing extensions, we ask:
\begin{question}
Is every model $V\models \ZF+\DC_\omega$ definable in its Cohen extensions?
\end{question}
The class of posets admitting a closure point at $\delta$, defined in Section~\ref{sec:preliminaries}, is a natural extension of the class of posets admitting a gap at $\delta$. What additional assumptions must be added to generalize  Main~Theorem~1  to posets admitting a closure point at $\delta$? By Theorem~\ref{th:forcingcoverandapproxzf}, the pair consisting of a $\ZF+\DC_\delta$ ground model and its forcing extension by a poset admitting a closure point at $\delta$ has the $\delta^+$-cover and $\delta^+$-approximation properties. Since Theorem~\ref{th:coverandapproxzf} requires $\DC_{\delta^+}$ to conclude uniqueness for submodels with the $\delta^+$-cover and $\delta^+$-approximation properties, it seems reasonable to expect that the necessary choice principle will have to be strengthened to $\DC_{\delta^+}$.
\begin{question}\label{q3}
Is every model $V\models \ZF+\DC_{\delta^+}$ definable in its set-forcing extensions by posets admitting a closure point at $\delta$?
\end{question}
\noindent There are two difficulties in answering this question using our methods. First, we do not know whether posets admitting a closure point at $\delta$ preserve $\DC_{\delta^+}$ (by Theorem~\ref{th:dcpreservation}, we know only that they preserve $\DC_\delta$). Second, forcing extensions by posets admitting a closure point at $\delta$ may collapse $\delta^{\plusplus}$, violating the $H_{\delta^{\plusplus}}^{V[G]}\cap V=H_{\delta^{\plusplus}}^V$ requirement, a condition that was crucially used in the proof of the uniqueness Theorem~\ref{th:coverandapproxzf}.
A resolution of question~\ref{q3} may come down to answering the following.
\begin{question}
Is the $\her{\delta}^W\cap V=\her{\delta}^V$ requirement necessary in Theorem~\ref{th:coverandapproxzf}\,?
\end{question}
A natural approach to answer this question may be to first address the analogous situation in the $\ZFC$ context and find out whether the $(\delta^\plus)^V=(\delta^\plus)^W$ requirement is necessary in the uniqueness Theorem~\ref{th:coverandapprox}.
In regards to the preservation of the choice fragments $\DC_\delta$ by forcing, it seems natural to ask whether Theorem~\ref{th:DCPreservation2} can be strengthened to show that $\DC_\delta$ is preserved by $\leq\delta$-distributive forcing.
\begin{question}
Do $\lesseq\delta$-distributive posets preserve $\DC_\delta$ to the forcing extension?
\end{question}

In Main~Theorem~2, we showed that there always exist models of $\ZFC^-$ violating ground model definability, and also that it is consistent for this to be the case for canonical models $\her{\kappa}$. But all our counterexample models had the feature that the powerset of the forcing poset was a proper class. By a result of Woodin, we know that in a universe with an $I_0$-cardinal, there is a model $\her{\lambda}$ that is not definable in any of its forcing extensions by a poset of size less than $\lambda$ that adds a countable sequence of elements of $\lambda$ (Theorem~\ref{th:zfcminusundefinablesmall}). This introduces the exciting possibility that the existence of $\ZFC^-$-models violating ground model definability for such posets may carry large cardinal strength.
\begin{question}
What is the consistency strength of the existence of a model $\her{\kappa}$ that is not definable in its forcing extension by a poset whose powerset has size $\lesseq\kappa$?
\end{question}
\noindent A more targeted approach would be to settle the situation with the definability of models $\her{\kappa}$ in their Cohen extensions. In this case, Theorem~\ref{th:sufficient}~(1) suggests that such a $\kappa$ should be a singular cardinal $\kappa$ of cofinality $\omega$. Thus, it is not coincidental that Woodin's $\lambda$ is just such a cardinal.
\begin{question}
If $2^\omega\leq\kappa$, what is the consistency strength of having a model $\her{\kappa}$ that is not definable in its Cohen extension?
\end{question}

\bibliographystyle{alpha}
\bibliography{groundmodels}
\end{document}